\newcommand{\excise}[1]{}
\newtheorem{thm}{Theorem}[section]
\newtheorem{lemma}[thm]{Lemma}
\newtheorem{cor}[thm]{Corollary}
\newtheorem{prop}[thm]{Proposition}
\theoremstyle{definition}
\newtheorem{example}[thm]{Example}
\newtheorem{remark}[thm]{Remark}
\newtheorem{defn}[thm]{Definition}
\newtheorem{case*}[thm]{Case}
\numberwithin{equation}{section}
\begin{document}

\mbox{}
\title{Binomial ideals of domino tilings }

\author{Elizabeth Gross and Nicole Yamzon }
\thanks{$\ $ \\
Elizabeth Gross was supported by the National Science Foundation DMS- 1620109 and DMS-1945584. Nicole Yamzon was supported by Alfred P. Sloan
Foundation’s Minority Ph.D. Program, awarded in (2018)}
\address{Department of Mathematics, University of Hawai`i at M\={a}noa, Honolulu, HI 96822}
\email{egross@hawaii.edu}
\address{Department of Mathematics\\University of Illinois\\Urbana, IL 61801}
\email{nyamzon2@illinois.edu}

\date{\today}

\begin{abstract} In this paper, we consider the set of all domino tilings of a cubiculated region. The primary question we explore is: \emph{How can we move from one tiling to another?} Tiling spaces can be viewed as spaces of subgraphs of a fixed graph with a fixed degree sequence.  Moves to connect such spaces have been explored in algebraic statsitics. Thus, we approach this question from an applied algebra viewpoint, making new connections between  domino  tilings,  algebraic  statistics, and toric algebra.  Using results from toric ideals of graphs, we are able to describe moves that connect the tiling space of a given cubiculated region of any dimension. This is done by studying binomials that arise from two distinct domino tilings of the same region. Additionally, we introduce \emph{tiling ideals} and \emph{flip ideals} and use these ideals to restate what it means for a tiling space to be flip connected.   Finally, we show that if  $R$ is a $2$-dimensional simply connected cubiculated region,  any binomial arising from two distinct tilings of $R$ can be written in terms of quadratic binomials.  As a corollary to our main result,  we obtain an alternative proof to the fact that the set of domino tilings of a $2$-dimensional simply connected region is connected by flips.

\end{abstract}

\maketitle



\section{Introduction and Background}
A $2 \times 1$ \emph{domino} (or a $1 \times 2$ domino) is two unit squares joined along a single edge. A \emph{domino tiling} of a region is a covering of the region with dominos such that there are no gaps or overlaps. As an area of mathematical research, domino tilings  appeared as early as  1937 in the context of thermodynamics and dimer systems \cite{fowl}. Several bodies of work in the 2-dimensional setting show that such objects are rich and nuanced \cite{k} \cite{ft}, \cite{t}, \cite{ebb}, \cite{ken}, \cite{ken1}, \cite{ken2}. For example,  Kasteleyn and Fisher--Temperley proved independently \cite{ft, k} that the number of tilings of a $2n \times 2m$ rectangle is 
$$
4^{mn} \prod_{j=1}^m \prod_{k = 1}^n \Big( \cos^2 \frac{2\pi}{2m+1} +\cos^2 \frac{k \pi}{2n+1} \Big).
$$
While, there is no closed-form expression for the number of domino tilings of an arbitrary $2$-dimensional region, there are many papers that study this problem for specific types of regions, such as Aztec diamonds and pyramids \cite{sylv, stanp, stan}.  Higher dimensional regions, such as $3$-dimensional regions, have also been explored.  For example, in 1998, Ciucu gave an upper bound on the number of $3$-dimensional domino tilings of a $n \times n \times n$ cube \cite{ciu}.

In lieu of a complete enumeration of the tilings of a region, one can estimate the number of tilings through Monte Carlo Markov chain sampling methods. Such methods require a set of moves that connect the space, which lead us to our primary question of interest: \emph{What are sets of moves that connect the space of domino tilings for a fixed region?} We tackle this question from an algebra lens, making a new connection between domino tilings,  algebraic statistics, and combinatorial commutative algebra through toric ideals of graphs.

For $2$-dimensions, it is known that any two domino tilings of a simply connected region can be obtained through a sequence of flips \cite{t, sal}.  Generalizing results to higher dimensions has been of interest to fields from combinatorics to solid state chemistry \cite{ebb, ken2}. However, previous results, such as those built on Thurston's \emph{height function} \cite{ran} fail to generalize to the $3$-dimensional setting. In fact, in $3$-dimensions even the most uncomplicated of regions fail to be flip connected. For instance the domino tilings of $l \times m \times n$ box are not connected by flips \cite{fkms,k,milet}. Milet and Saldanha  introduced another local move called the \emph{trit}, which operates on three dominoes at a time (as opposed to the flip that operates on two dominoes at a time) \cite{milet}.  While some $3$-dimensional tiling spaces are connected by flips and trits, not all are; \cite{milet} includes some examples. In  \cite{fkms}, Klivans et al. give conditions in terms of topological invariants for testing whether two different $3$-dimensional domino tilings are connected by flips or by flips and trits.  

Our paper explores the connectivity question by noting that the space of tilings of a region $R$ corresponds to a particular \emph{fiber} of a \emph{design matrix} $A$, where $A$ is prescribed by the region $R$ (here we are using language from algebraic statistics, which is defined formally in the Section \ref{sec:toricideals}).  By appealing to algebraic statistics, and in particular, the Fundamental Theorem of Algebraic Statistics \cite{algstatsbook}, for any given region, \emph{of any dimension}, we can find a set of moves that is guaranteed to connect its tiling space by finding a set of generators of the \emph{toric ideal of A}, a binomial ideal.  Using the well-known correspondence between domino tilings of a region $R$ and perfect matchings of an associated graph $G_R$ \cite{west} and results in combinatorial commutative algebra on \emph{toric ideals of graphs} \cite{oh}, in Theorem \ref{thm:generators}, we describe the moves guaranteed to connect the tiling space in terms of the graph $G_R$.  The moves described in  Theorem  \ref{thm:generators} are not always local flips though.  While we can show that if the toric ideal of $G_R$ is quadratic, then the space of tilings of $R$ is flip connected, the converse is not always true. In order to explore flip connected tiling spaces more, we introduce two additional binomial ideals, the \emph{tiling ideal} and the \emph{flip ideal}.  These ideals are not always prime, and thus not always toric, however we can describe exactly when a tiling space is flip connected using these two ideals (Theorem \ref{thm:flipconnected}). Finally, we showcase this algebraic perspective by providing an alternative proof to the fact that the tiling space of any simply connected region of $\mathbb{R}^2$ is flip connected. 

This paper is organized as follows. In Section 2, we formally define domino tilings and discuss tilings from a graph theoretic viewpoint. In Section 3, we discuss the connection of tiling spaces  to algebraic statistics and toric ideals of graphs and describe a set of moves guaranteed to connect the tiling space of a given region.  Additionally, we introduce the tiling ideal and the flip ideal of a region $R$ and restate what it means for a tiling space to be connected in terms of these two ideals.  Finally, in Section 4, we show, using algebraic techniques, that the tiling space of any simply connected region of $\mathbb{R}^2$ is flip connected.

\section{Domino Tilings}
Let $R$ be a $n$-dimensional cubiculated region of $\mathbb R^{N}$, i.e. a homogeneous cubical complex of dimension $n$ embedded in $\mathbb R^{N}$ with $n \leq N$. A $d$-dimensional \emph{domino} is two adjacent elementary $n$-dimensional cubes connected along a face of dimension $n-1$; we will denote the set of dominos contained in $R$ as $\mathcal D_R$.
A \emph{domino tiling} $T \subseteq \mathcal D_R$ of a cubiculated region $R$ is defined to be a covering of $R$ with dominoes such that every elementary cube of $R$ is covered  exactly once; we will denote the space of all tilings of $R$ as $\mathcal T_{R}$.  We are interested in local moves that connect all the tilings in $\mathcal T_{R}$. A move between two tilings $T_1, T_2 \in \mathcal T_{R}$ is an ordered pair $M=(D_1, D_2)$ of two sets of dominoes $D_1, D_2 \subseteq \mathcal D_R$ such that $T_2 = (T_1\setminus D_1) \cup D_2$.  For simplicity, if $M$ is a move from $T_1$ to $T_2$, we will write $T_2 = T_1 +M$. 
We say a move has size $d$ if $|D_1|=|D_2|=d$. We begin our discussion with the simplest move, the \emph{local flip}, or \emph{flip}.



\begin{defn}
A \emph{local flip} is performed by replacing a pair of two adjacent parallel dominoes, i.e. two dominoes that share two $n-1$ dimensional faces, with two adjacent parallel dominoes in a perpendicular direction to the first pair.  We will refer to the set of all flip moves for $R$ as $\mathcal M_{R_{flip}}$.
\begin{figure}[h]
    \centering
    \includegraphics[scale=.25]{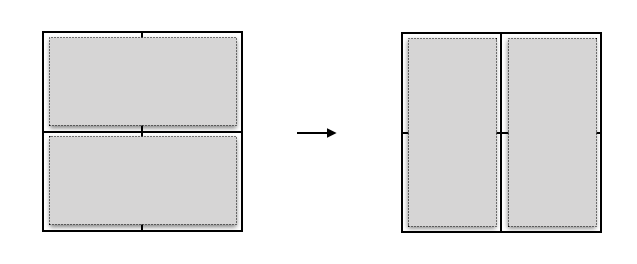}
    \caption{A local flip.}
    \label{fig:flip}
\end{figure}
\end{defn}

\begin{defn}
Let $R$ be a cubiculated region, and let $\mathcal M_R$ be a set of possible moves of $R$. We say $\mathcal M_R$ \emph{connects} $\mathcal T_R$ if for every two tilings $T_1, T_2 \in \mathcal T_{R}$, there exists $M_1, \ldots M_r \in \mathcal M_R$ such that $T_2 = T_1 + M_1+M_2 + \ldots M_r$ and $T_1 + M_1+\ldots M_s \in \mathcal T_R$ for all $1 \leq s \leq r$. 
\end{defn}

Two tilings $T_1$ and $T_2$ are \emph{flip connected} if there exists a sequence $M_1, \ldots, M_r \in \mathcal M_{R_{flip}}$ such that $T_2 = T_1 + M_1 + \ldots M_r$ and $T_1 + M_1+\ldots M_s \in \mathcal T_R$ for all $1 \leq s \leq r$. A cubiculated region $R$ is \emph{flip connected} if every two tilings of $R$ are flip connected. It is known that if a $2$-dimensional region is simply connected then its corresponding space of tilings is flip connected.

\begin{thm} \cite{t, sal} If $R$ is a simply connected region in $2$-dimensions, then $\mathcal T_R$ is flip connected.\label{thm:Thurston}
\end{thm}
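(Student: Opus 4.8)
The plan is to use Thurston's height-function machinery. First I would fix a checkerboard $2$-coloring of the unit squares of $R$ by black and white and a base lattice vertex $v_0 \in \partial R$, and, for each $T \in \mathcal{T}_R$, assign to every directed lattice edge $(A,B)$ with $A,B \in R$ an increment $\delta_T(A,B)$: namely $+1$ if the square immediately to the left of the arrow is black and $\{A,B\}$ is not bisected by a domino of $T$, $-3$ if it is black-on-left and bisected, and the negatives of these if the left square is white (so $\delta_T(B,A) = -\delta_T(A,B)$). I then define $h_T(v)$ to be the sum of $\delta_T$ along any lattice path from $v_0$ to $v$, with $h_T(v_0)=0$. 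The first thing to check is that $h_T$ is well defined: the increments sum to zero around the boundary of every unit square (of its four edges, exactly one is bisected by the domino covering it, contributing $\mp 3$ against three contributions of $\pm 1$), and since $R$ is simply connected every lattice cycle bounds a union of unit squares of $R$, so the sum is path-independent.

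Next I would establish the structural facts that drive the argument. (i) The restriction $h_T|_{\partial R}$ is the same for every $T$, since a boundary edge of $R$ is never bisected, so along $\partial R$ every increment is $\pm 1$ and depends only on the coloring. (ii) The assignment $T \mapsto h_T$ is injective, because $\{A,B\}$ is bisected by a domino of $T$ precisely when $|h_T(A)-h_T(B)|=3$. (iii) A $\mathbb{Z}$-valued function $g$ on the lattice vertices in $R$ is of the form $h_T$ if and only if it matches the common boundary values of (i), is congruent to them modulo $4$, and satisfies $g(B)-g(A)\in\{+1,-3\}$ (resp.\ $\{-1,+3\}$) across every edge with black (resp.\ white) square on the left; then $T$ is recovered by bisecting the edges where $|g(A)-g(B)|=3$. (iv) Using (iii), a short case analysis --- the only delicate point being the local edge condition, handled via the modulo-$4$ congruence together with comparing the four cases for which of two functions attains the larger value at the two endpoints of an edge --- shows that the pointwise maximum of two functions of this form is again of this form; hence the set of height functions of tilings of $R$ is a finite lattice under pointwise $\max$ and $\min$, with a greatest element $h_{\max}$, realized by a tiling $T_{\max}$.

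The heart of the proof is an extremal step showing that any $T$ can be carried to $T_{\max}$ by flips. Given $T\neq T_{\max}$, put $d(u)=h_{\max}(u)-h_T(u)\geq 0$; by the modulo-$4$ part of (iii), $d$ is a nonnegative multiple of $4$, it is positive somewhere, and it vanishes on $\partial R$ by (i). Let $C$ be a connected component of the set of vertices where $d$ attains its maximum, and let $u^\ast$ minimize $h_T$ over $C$. I claim $u^\ast$ is a strict local minimum of $h_T$ among its four lattice neighbors: a neighbor $w\notin C$ has strictly smaller $d$, and $d(u^\ast)-d(w)$, being a positive multiple of $4$ equal to the difference of two increments each in $\{\pm 1,\pm 3\}$, must equal $4$, which forces $h_T(u^\ast)-h_T(w)\in\{-1,-3\}$; a neighbor $w\in C$ has $h_T(w)\geq h_T(u^\ast)$ by the choice of $u^\ast$, with equality impossible since an edge increment is never $0$. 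The bookkeeping of (iii) then shows that a strict local minimum of $h_T$ at an interior vertex $u^\ast$ means exactly that the $2\times 2$ block around $u^\ast$ (which lies in $R$ because $u^\ast\notin\partial R$) is covered by two parallel dominoes of $T$, and that the flip there raises $h_T(u^\ast)$ by exactly $4$ while leaving every other value unchanged; since $d(u^\ast)\geq 4$, the new height function is still $\leq h_{\max}$ everywhere. Thus $\sum_u\bigl(h_{\max}(u)-h_T(u)\bigr)$ drops by $4$ at each such flip, so after finitely many flips we reach $h_{\max}$, i.e.\ $T_{\max}$. Finally, given arbitrary $T_1,T_2\in\mathcal{T}_R$, concatenate a flip sequence $T_1\to T_{\max}$ with the reverse (flips are invertible) of a flip sequence $T_2\to T_{\max}$.

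The step I expect to be the main obstacle is this extremal one, and specifically the choice of vertex to flip: a vertex of maximal deficit need not itself be flippable, so one must pass to a connected component of the maximal-deficit set and take the $h_T$-minimizer there, and then verify that \emph{strict local minimum of $h_T$} and \emph{flippable with the flip increasing $h_T$} genuinely coincide. Making the sign and left/right conventions consistent enough for that equivalence to come out cleanly is the part most prone to error; the remaining items --- well-definedness, boundary invariance, injectivity, and the lattice property --- are routine once the conventions are fixed. (An alternative, purely algebraic derivation of the same conclusion appears later in the paper via the toric/binomial description of $\mathcal{T}_R$.)
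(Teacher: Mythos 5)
Your outline is correct, but it is not the route this paper takes: you have reconstructed the classical height-function proof of Thurston and Saldanha et al.\ (the proof of the cited sources \cite{t,sal}), whereas the paper's stated purpose is to give an \emph{alternative}, algebraic proof of Theorem \ref{thm:Thurston}. Your argument --- well-definedness of $h_T$ via simple connectivity, tiling-independence of boundary values, injectivity, the characterization of height functions, the lattice (pointwise max/min) structure, and the extremal step showing a vertex of maximal deficit chosen to minimize $h_T$ on its component is a strict local minimum, hence flippable with the flip raising the height there by exactly $4$ --- is the standard machinery, and the delicate points (the deficit difference across an edge being exactly $4$, and the equivalence of ``strict local min'' with ``flippable pair around the vertex'') are handled correctly. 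The paper instead proves Theorem \ref{tilemain}: it encodes tilings as perfect matchings of $G_R$, associates to a pair of tilings the binomial $B_{T_1,T_2}=y^{T_1}-y^{T_2}$, and shows this binomial lies in the flip ideal by (a) splitting $T_1\cup T_2$ into cycles and writing $B_{T_1,T_2}$ as a monomial-weighted sum of cycle binomials (Lemma \ref{lem:cycles}), (b) using flips to turn perimeter cycles into contractible ones by eliminating interior $2$-cycles (Lemma \ref{lem:perimetertocontractible}), and (c) inducting on the length of a contractible cycle via its north-west corner and an even chord (Lemma \ref{lem:contractiblecycles}); flip connectivity then follows from the ideal containment $I_{R_{tiling}}\subseteq I_{R_{flip}}$ (Theorem \ref{thm:flipconnected}), which rests on the Markov-basis correspondence. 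The trade-off: your height-function route yields extra structure (a distributive lattice of tilings, extremal tilings, and flip-distance information) but is intrinsically $2$-dimensional and leans on simple connectivity globally for well-definedness; the paper's route gives less metric information but embeds the problem in a framework (toric ideals of graphs, Theorem \ref{thm:generators}) that produces connecting moves for cubiculated regions in \emph{any} dimension and recasts flip connectivity as a checkable algebraic containment, localizing the use of dimension two and simple connectivity to the three lemmas above.
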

In \cite{t}, and more explicitly in \cite{sal}, Theorem \ref{thm:Thurston} is proved via the construction and analysis of a map from the vertices of a domino tiling to $\mathbb{Z}$ called the \emph{height function}.  In this paper, we give an alternative proof of the theorem using binomial ideals.
 







\subsection{Connections to graph theory}

In order to apply tools from combinatorial commutative algebra, it is helpful to think of tilings of a cubiculated region as perfect matchings of a graph.  Here we set up the terminology to construct this correspondence.

Let $R$ be a $n$-dimensional cubiculated region.  Let $G_R$ be the undirected simple graph that has one vertex for each elementary cube in $R$ and an edge between a pair of vertices if their two corresponding cubes in $R$ share a $n-1$ dimensional face. Given a graph $G = (V, E)$, a \emph{matching} $M$ is an independent edge set. A \emph{perfect matching} is a matching that covers all vertices in $G$. By the construction of $G_R$ from $R$, we see that there is a one-to-one correspondence between tilings of $R$ and perfect matchings on the graph $G_R$. This correspondence is illustrated in Figure \ref{fig:perfmatch}.


\begin{figure}[h]

    \includegraphics[scale=.25]{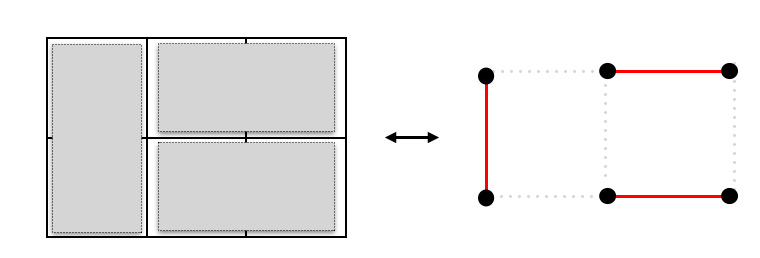}
    \caption{Let $R=B_{2,3}$, the $2 \times 3$ box. On the left is a tiling on $R$, on the right is a perfect matching of $G_{2,3}$.}
    \label{fig:perfmatch}
\end{figure}

\begin{example} Let the region $R$ be the $m \times \ell$ box denoted $B_{m,\ell}$. Then $G_R$ is denoted by $G_{m,\ell}$ and is the $m \times \ell$ grid graph whose vertices correspond to the points in $[0,m] \times [0,\ell] \cap \mathbb{Z}^2$. 
\end{example}

\begin{remark} \label{rmk:gridgraphs}
Since $R$ can always be viewed as a subregion of a $n$-dimensional box $B_{m_1, \ldots, m_n}$, the graph $G_R$ is a subgraph of the grid graph $G_{m_1, \ldots, m_n}$. 
\end{remark}

For the rest of this paper, we will refer to tilings and matchings interchangeably. We will use the underlying graph structure to understand the connectivity of the space of $2$-dimensional domino tilings for a region $R$. While tilings on a cubiculated region $R$ can be characterized by perfect matchings on $G_R$, moves between two tilings can be characterized by even cycles. 



    

\begin{defn}
A \emph{walk} on $G$ is a finite sequence of the form 
$$
w = \big((v_{1}, v_{2}), (v_{2}, v_{3}), \dots, (v_{n-1}, v_{n})\big)
$$
with each $v_{i} \in V(G)$ and $\{v_{i-1}, v_{i}\} \in E(G)$. In the case where $v_{1} = v_{n}$, then $w$ is called a \emph{closed walk}.  A \emph{cycle} is a closed walk that traverses each vertex in the walk exactly once.  The \emph{length} of a cycle or closed walk is the number of edges in the walk. A closed walk is \emph{even} if the cycle has even length. An even closed walk is \emph{primitive} if it does not contain a proper closed even subwalk. 
\end{defn}



\begin{prop}
Let $R$ be a cubiculated region. Every cycle of the graph $G_R$ is even.
\end{prop}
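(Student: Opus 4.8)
The plan is to exploit the fact that $G_R$ is a subgraph of a grid graph $G_{m_1,\dots,m_n}$ (Remark \ref{rmk:gridgraphs}), and that grid graphs are bipartite. First I would exhibit an explicit $2$-coloring of the vertices of $G_{m_1,\dots,m_n}$: assign to the elementary cube centered at the integer point (or indexed by) $(a_1,\dots,a_n)$ the color given by the parity of $a_1+\cdots+a_n$. Every edge of $G_R$ joins two cubes sharing an $(n-1)$-dimensional face, which means their index vectors differ by $\pm 1$ in exactly one coordinate; hence the two endpoints of any edge have opposite parity. This shows $G_R$ is bipartite, with the bipartition classes being the even-parity and odd-parity cubes.

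The key step is then the standard fact that a bipartite graph has no odd cycles. I would carry this out directly: let $w=\big((v_1,v_2),(v_2,v_3),\dots,(v_{k-1},v_k)\big)$ be a cycle, so $v_1=v_k$ and the $v_1,\dots,v_{k-1}$ are distinct. Each step changes the parity of the color, so the color of $v_i$ equals the color of $v_1$ precisely when $i$ is odd (counting $v_1$ as position $1$). Since $v_k=v_1$, position $k$ must have the same color as position $1$, forcing $k$ to be odd; but the length of the cycle is the number of edges, which is $k-1$, hence even. Therefore every cycle of $G_R$ has even length.

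I do not anticipate a genuine obstacle here — the statement is essentially the observation that grid graphs, and therefore their subgraphs, are bipartite. The only point requiring minor care is bookkeeping the relationship between the number of vertices traversed and the number of edges (the off-by-one in the cycle notation used in the paper's definition of a walk), and making sure the coloring argument is phrased for the vertex set of $G_R$ itself rather than for the ambient box. One could alternatively phrase the whole argument intrinsically: define the color of a cube as the parity of the sum of the coordinates of (say) its minimal corner, observe adjacent cubes differ in one coordinate by one, and conclude bipartiteness without reference to the ambient box; I would likely present it this way for self-containedness, but invoking Remark \ref{rmk:gridgraphs} and "subgraphs of bipartite graphs are bipartite" is the shortest route.
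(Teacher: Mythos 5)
Your proposal is correct and follows essentially the same route as the paper: the paper likewise invokes Remark \ref{rmk:gridgraphs} to see that $G_R$ is a subgraph of a grid graph, hence bipartite, and then concludes that every cycle is even, citing \cite{west} for the bipartite-implies-no-odd-cycles fact that you spell out explicitly with the parity coloring. Your write-up simply makes those cited steps self-contained, which is fine.
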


\begin{proof} As implied by Remark \ref{rmk:gridgraphs}, for any cubiculated region $R$, the graph $G_R$ is a subgraph of a grid graph $G_{m_1, \ldots, m_n}$.  In particular, $G_R$ is bipartite.  Thus, every cycle of $G_R$ is even \cite{west}.
\end{proof}

\begin{remark} Since every cycle of $G_R$ is even, the only primitive even closed walks on $G_R$ are cycles \cite{oh}.
\end{remark}

In the following proposition, we see that the union of two tilings of $R$ corresponds to a collection of cycles on $G_R$.

\begin{prop}
Let $T_1$ and $T_2$ be tilings and let $G = T_1 \cup T_2$ (considered as a multigraph). Then $G$ will be a disjoint collection of even cycles, some of which may be $2$-cycles. 
\end{prop}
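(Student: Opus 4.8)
The plan is to work entirely on the graph side, via the correspondence between tilings of $R$ and perfect matchings of $G_R$ set up above. The starting point is the defining property of a perfect matching: every vertex of $G_R$ is covered by exactly one edge. Applying this to $T_1$ and to $T_2$ separately, each vertex $v \in V(G_R)$ is incident to exactly one edge of $T_1$ and exactly one edge of $T_2$; hence in the multigraph $G = T_1 \cup T_2$ every vertex has degree exactly $2$.

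Next I would invoke the standard structural fact that a finite multigraph in which every vertex has degree $2$ is a vertex-disjoint union of cycles. Here one must be slightly careful with the multigraph convention: whenever $T_1$ and $T_2$ use a common edge $e$, that edge occurs with multiplicity $2$ in $G$ and forms a $2$-cycle, which is exactly the phenomenon flagged in the statement. The fact itself can be proved directly: starting from any vertex and repeatedly leaving each vertex by an incident edge not yet used, finiteness forces the walk to return to its start, and since each vertex has degree exactly $2$ the closed walk so produced meets no other part of $G$; deleting it and iterating on the remaining multigraph yields the decomposition into vertex-disjoint cycles.

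Finally, to see that each of these cycles has even length, I would use that the two edges of $G$ incident to any given vertex are one edge of $T_1$ and one edge of $T_2$. Consequently, along any cycle of $G$ the edges must alternate between $T_1$ and $T_2$, and a closed walk that alternates between two edge classes necessarily has even length; the $2$-cycles are precisely the degenerate case where $T_1$ and $T_2$ agree on a single edge. (Alternatively, the cycles of length at least $4$ are already known to be even because $G_R$ is bipartite, by the earlier proposition, but the alternation argument handles the $2$-cycles uniformly and needs no bipartiteness input.)

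I do not expect a genuine obstacle here; the statement is essentially the classical observation that the union of two perfect matchings is a disjoint union of alternating cycles. The only point requiring care is the bookkeeping for the multigraph — making explicit that a shared edge contributes a doubled edge and hence a $2$-cycle, and that ``disjoint collection of cycles'' is meant in the vertex-disjoint sense.
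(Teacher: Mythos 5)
Your proof is correct and follows essentially the same route as the paper's: since each of $T_1$ and $T_2$ is a perfect matching, every vertex has degree exactly $2$ in $T_1 \cup T_2$, so the multigraph is $2$-regular and hence a vertex-disjoint union of cycles. The only difference is that you make the evenness explicit via the $T_1$/$T_2$ alternation along each cycle (and carefully account for doubled edges giving $2$-cycles), whereas the paper's proof stops at the $2$-regular decomposition and leaves evenness to the preceding proposition that all cycles of $G_R$ are even by bipartiteness.
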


\begin{proof}
Consider the graph $G= T_1 \cup T_2$ with $n$ vertices. We know for each $i \in V(G)$ the $deg_{G} (i) =2$. By definition $G$ must be a $2$-regular graph of size $n$. A characterization of $2$-regular graphs gives us that $G$ will be formed by a disjoint collection of cycles.
\end{proof}

Since $G=T_1 \cup T_2$ is a disjoint collection of cycles we introduce the following terminology.

\begin{defn}
Let $G$ be a graph. We say $\mathcal C = \{\mathcal C_1, \mathcal C_2, \ldots, \mathcal C_r\}$ is a \emph{cycle cover} of $G$ if each $\mathcal C_i$ is a cycle and every vertex in $G$ is covered by exactly one $\mathcal C_i$.
\end{defn}

Note that given a region $R$ and two tilings, $T_1$ and $T_2$, the multigraph $\mathcal C = T_1 \cup T_2$ is a cycle cover of $G_R$.  Additionally, we can think of the edges in $\mathcal C$ as two-colorable, specifically, we can color the edges corresponding to $T_1$ red and the edges corresponding to $T_2$ as blue.  This coloring will be helpful in later sections.

Finally, we end this section with a discussion on chords, which will play a role in the algebra in the next two sections.

\begin{defn} Let $\mathcal{C}$ be a cycle of a graph $G=(V,E)$. An edge $e \in E$ is a \emph{chord} of $\mathcal{C}$ if $e$ connects two vertices covered by $\mathcal{C}$, but is not in $\mathcal{C}$. A cycle is \emph{chordless} if it does not have a chord in $G$.
\end{defn}

\begin{defn} Let $\mathcal{C}=\big((v_{1}, v_{2}), (v_{2}, v_{3}), \dots, (v_{n-1}, v_{n})\big)$ be an even cycle of a graph $G=(V,E)$. We say $e=\{v_i, v_j\} \in E(G)$ is a chord of $\mathcal{C}$ with $i<j$. Furthermore, we call $e$  an \emph{even chord} if $j-i$ is odd, in other words, if the two new cycles obtained by adding $e$ to $\mathcal{C}$ are both even.
\end{defn}





\medskip


\section{Tilings and Toric Ideals of Graphs} \label{sec:toricideals}

In this section, we introduce toric ideals of graphs and their connections to tiling spaces. Toric ideals of graphs have been well-studied (see, for example, \cite{v, oh, oh2, reyes, tatakis,biermann, galetto}).  By making the connection to toric ideals of graphs, we can describe a set of moves that is guaranteed to connect the tiling space $\mathcal{T}_R$ for any cubiculated region $R$.

\subsection{Toric ideals of graphs and Markov bases}

Let $G=(V, E)$ be a graph. Consider the following two polynomial rings
\begin{align*}
\mathbb{K}[E] &= \mathbb{K} [ y_e \mid e \in E[G]], \text{ and}\\
\mathbb{K}[V]  & = \mathbb{K} [x_v \mid v \in V(G) ].
\end{align*}  
Let $\phi_G$ be the ring homomorphism defined as follows
\begin{align*}
 \phi_{G}: \mathbb{K}[E] &\to \mathbb{K}[V] \\ y_{(i,j)} & \mapsto x_ix_j.  
\end{align*} 
The \emph{toric ideal} of $G$, denoted $I_G$, is defined to be the kernel of the map $\phi_G$ 
$$I_G : = \ker (\phi_G) = \{ f \in \mathbb{K}[E] \ : \ \phi_{G}(f) = 0 \}.$$

  For our application, we are going to be most interested in the generating set of a toric ideal of a graph.  Such generating sets can be described by primitive closed even walks.  Furthermore, when $G$ is bipartite, a generating set of $I_G$ can be given simply in terms of even cycles.

\begin{defn}
Let $w$ be an even cycle, i.e.
$$
w = \Big((v_1,v_2), (v_2, v_3), \dots, (v_{2n}, v_{1})\Big)
$$
where $e_i = \{v_i,v_{i+1}\}$. The binomial arising from $w$ is
$$
B_w = \prod_{i=1}^n y_{e_{2i-1}}-\prod_{i=1}^ny_{e_{2i}}.
$$

\end{defn}

\begin{prop} Given a bipartite graph $G$, the ideal $I_G$ is generated by the set of binomials arising from even cycles on $G$ \cite{v}.
\end{prop}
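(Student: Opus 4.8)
The plan is to prove that the toric ideal $I_G$ of a bipartite graph $G$ is generated by the binomials $B_w$ arising from even cycles $w$ of $G$. First I would recall the standard description of $I_G$ (valid for any graph, and due to Villarreal) as being generated by the binomials $B_w$ attached to \emph{primitive} even closed walks of $G$: this is the well-known structural theorem for toric ideals of graphs, and it is the result being cited. So the whole task reduces to showing that, \emph{when $G$ is bipartite}, every primitive even closed walk is in fact a cycle, so that the generating set collapses to the set of even cycles. Concretely, I want the equality of ideals $\langle B_w : w \text{ primitive even closed walk}\rangle = \langle B_{\mathcal C} : \mathcal C \text{ even cycle of }G\rangle$, where the right-hand side is obviously contained in the left, and the reverse inclusion follows if every primitive even closed walk is a cycle.

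The key step is therefore the combinatorial claim: in a bipartite graph, a primitive even closed walk is a cycle. I would argue by contrapositive. Suppose $w = (e_1, e_2, \dots, e_{2n})$ is an even closed walk that is not a cycle, so some vertex $v$ is visited more than once (with $v$ not being merely the start/end coincidence); say the walk returns to $v$ at positions $i < j$ with $1 \le i < j \le 2n$. This splits $w$ into two closed subwalks $w_1 = (e_{i+1}, \dots, e_j)$ and $w_2 = (e_{j+1}, \dots, e_{2n}, e_1, \dots, e_i)$, each of which is a nonempty proper closed subwalk of $w$. Now I invoke bipartiteness: in a bipartite graph every closed walk has even length (the walk must alternate between the two sides of the bipartition and return to its starting side). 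Hence both $w_1$ and $w_2$ are \emph{even} closed subwalks, and at least one of them is proper, so $w$ contains a proper even closed subwalk and is not primitive. This is exactly the remark already recorded in the excerpt ("the only primitive even closed walks on $G_R$ are cycles"), and it is the heart of the argument; I expect this to be the step requiring the most care, chiefly in bookkeeping the walk-splitting and confirming properness of the subwalks.

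Finally I would package the inclusions. Given any primitive even closed walk $w$, by the previous step it is an even cycle $\mathcal C$, and its binomial $B_w = B_{\mathcal C}$ lies in the right-hand ideal; this gives $\langle B_w : w \text{ primitive}\rangle \subseteq \langle B_{\mathcal C} : \mathcal C \text{ even cycle}\rangle$. The reverse inclusion is immediate since every even cycle is in particular a primitive even closed walk. Combining with Villarreal's theorem that $I_G$ equals the left-hand ideal finishes the proof. The only genuine obstacle is the bipartite-implies-even-length fact and the walk-decomposition argument; everything else is formal manipulation of ideals and a citation.
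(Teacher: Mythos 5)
Your argument is correct. Note that the paper itself offers no proof of this proposition: it is stated as a cited result from Villarreal \cite{v}, so there is no in-paper argument to compare against. What you have written is the standard derivation underlying that citation: take as a black box the theorem that $I_G$ is generated by the binomials of (primitive) even closed walks for an arbitrary graph, and then observe that bipartiteness forces every primitive even closed walk to be a cycle. Your walk-splitting step is sound under the paper's definition of primitivity (no proper closed even subwalk): a repeated intermediate vertex splits the walk into two nonempty proper closed subwalks, each of even length because every closed walk in a bipartite graph is even, so a non-cycle even closed walk cannot be primitive. This is precisely the observation the paper records in its remark that the only primitive even closed walks on $G_R$ are cycles (there cited to \cite{oh}). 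The only caveat worth flagging is that the heavy lifting in your proof is the cited generation theorem itself, so your contribution is really the bipartite specialization; that is entirely appropriate here, since the proposition is presented as a literature citation rather than a new result.
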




Just as we can define a binomial arising from a cycle, we can define a binomial associated to two tilings. Regard two tilings $T_1$ and $T_2$ of a cubiculated region $R$ as perfect matchings in $G_R$.

\begin{defn}
Define the binomial arising from $(T_1,T_2)$ to be 
$$
B_{T_1,T_2} = \prod_{e_i \in T_1} y_{e_i} - \prod_{e_j \in T_2} y_{e_j}.
$$
\end{defn}
\noindent Note that if $R$ is a cubiculated region and $T_1, T_2 \in \mathcal T_{R}$, then the binomial $B_{T_1, T_2}$ arising from $(T_1, T_2)$ is in the toric ideal $I_{G_R}$ since
$$
\phi_{G_R}(B_{T_1,T_2})= \prod_{e_i \in T_1}\phi_{G_R}\big( y_{e_i}\big)-\prod_{e_j \in T_2}\phi_{G_R}\big(  y_{e_j} \big)=\prod_{i \in V(G_{R})} x_{i} - \prod_{i \in V(G_{R})} x_{i} = 0.
$$

\begin{remark}
For ease of notation, we will use the following monomial shorthand.  Let $E_0 \subseteq E(G)$, then we define
$$y^{E_0}:= \prod_{e_i \in E_0} y_{e_i}.$$
Thus, we will write $B_{T_1, T_2}$ as
$$B_{T_1, T_2}=y^{T_1}-y^{T_2}.$$
\end{remark}




Similar to a binomial $B_{T_1, T_2}$ arising from two tilings, the \emph{binomial arising from a tiling move} $(D_1, D_2)$ is $B_{D_1, D_2} = y^{D_1} - y^{D_2}$ and is also in $I_{G_R}$.

We can describe a way to move between any two tilings in $\mathcal T_R$ by invoking the Fundamental Theorem of Markov Bases from algebraic statistics \cite{diaconis, algstatsbook}. To do this we now build a connection between toric ideals of graphs and the language of Markov bases.  First, let's describe $I_G$ in an alternate way using design matrices.  Indeed, the most common way to define a toric ideal is through an integer matrix $A$; this matrix is referred to as the \emph{design matrix} in algebraic statistics.  Let $A$ be the vertex-edge incidence matrix of $G$ with $N=\#E(G)$ columns.  Then
$$I_{G} = I_{A} := \langle y^u - y^v \ | \ u, v \in \mathbb Z_{\geq 0}^N, Au = Av \rangle.$$
In this setting, we can think about $u, v \in \mathbb Z_{\geq 0}^N$ as integer vectors or as multisets of edges drawn from $E(G)$.  The condition $Au=Av$ means that $u$ and $v$ have the same degree sequence as multigraphs.

Let $u \in \mathbb Z_{\geq 0}^N$.  The \emph{fiber of $u$ with respect to $A$} is
$$\mathcal F (u)=\{v \in Z_{\geq 0}^N :Av=Au \}.$$
The fiber of $u$ is precisely the collection of all multigraphs with edges drawn from $E(G)$ with same degree sequence as $u$. Since every tiling of $R$ has the same degree sequence when viewed as a perfect matching of $G_R$, it is the case that
$\mathcal T_R = \mathcal F(T)$ for any tiling $T$ of $R$.  This key observation allows us to use Markov bases to find a set of moves to connect $\mathcal T_R$.

\begin{defn}
Let $A \in \mathbb Z^{M \times N}$. Let $\text{ker}_{\mathbb Z} A = \{v \in \mathbb{Z}^N : Av = 0\}$ be the integer kernel of $A$. A finite set $\mathcal{MB} \subset \text{ker}_{\mathbb Z} A$ is called a \emph{Markov basis} for $A$ if for all $u \in \mathbb Z_{\geq 0}^N$ and $v \in \mathcal F(u)$, there is a sequence $b_1,\ldots,b_r \in \pm \mathcal{MB} := \{ (-1)^i b \ : \ b \in \mathcal{MB}, \ i = 0,1\} $ such that
$$v = u + \sum_{k=1}^r b_k \text{ and } u + \sum_{k=1}^s b_k \geq 0 \text{ for all } s=1, \ldots r.$$
The elements of a Markov basis are called \emph{Markov moves}.
\end{defn}

When $A$ is the vertex-edge incidence matrix of $G_R$ for a cubiculated region $R$, every Markov move $b$ with $0, 1$, and $-1$ entries corresponds to a move $M=(D_1, D_2)$ on $R$ by letting $D_1$ be the set of edges whose corresponding entries of $b$ have value $-1$ and $D_2$ be the set of edges whose corresponding entries of $b$ have value $1$. Let $b_{D_1, D_2}$ be the vector in $\{-1,0,1\}^N$ that corresponds to the move $(D_1, D_2)$.  If $\mathcal{MB}$ is a Markov basis for $A$, then $\mathcal {MB}_{sf}:=\mathcal {MB} \cap \{-1,0,1\}^{N}$ connects $\mathcal T_R$ (due to the fact that every tiling in $\mathcal T_R$ can be viewed as a $0-1$ vector and thus applying a move not in $\{-1,0,1\}^{N}$ would move us outside of the fiber $\mathcal T_R$).  

The Fundamental Theorem of Markov Bases gives a way to test whether or not a set $\mathcal{MB}$ is indeed a Markov basis.

\begin{thm} \cite{diaconis}
Let $\mathcal {MB}=\{b_1,\dots,b_n\} \subset \mathbb Z^N$ be a set of vectors; note that every vector $b_i$ can be written uniquely as the difference $b_i = b_i^{+}-b_i^{-}$ of two non-negative vectors with disjoint support. The set $\mathcal {MB}=\{b_1,\dots,b_n\}$ is a Markov basis for the matrix $A$ if and only if the corresponding set of binomials 
$
	\{ x^{b_i^{+}} - x^{b_i^{-}} \}_{i=1,\dots,n}
$
generates the toric ideal $I_A$ \label{thm:FTMB}. 
\end{thm}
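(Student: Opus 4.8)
The plan is to prove the two directions of the equivalence separately, treating $I_A$ as the lattice ideal attached to $A$. First I would record the standard preliminary fact that every element of $I_A$ is a $\mathbb{K}$-linear combination of pure-difference binomials $x^u-x^v$ with $u,v\in\mathbb{Z}_{\geq 0}^N$ and $Au=Av$; this follows from the $\mathbb{Z}^M$-grading on $\mathbb{K}[x_1,\dots,x_N]$ in which $\deg x_j$ is the $j$-th column of $A$, since $I_A$ is homogeneous for this grading and in each graded component all monomials lie in one fiber. Because of this, the binomials $f_i:=x^{b_i^{+}}-x^{b_i^{-}}$ generate $I_A$ if and only if every such $x^u-x^v$ lies in $\langle f_1,\dots,f_n\rangle$, and that is the form of the assertion I would verify.

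For the ``Markov basis $\Rightarrow$ generators'' direction, take $x^u-x^v\in I_A$, so $v\in\mathcal F(u)$; the Markov property gives moves $b_{i_1},\dots,b_{i_r}\in\pm\mathcal{MB}$ with $u_0:=u$, $u_k:=u+\sum_{j\leq k}b_{i_j}$, $u_r=v$, and every $u_k\in\mathbb{Z}_{\geq 0}^N$. Telescoping gives $x^u-x^v=\sum_{k=1}^r\big(x^{u_{k-1}}-x^{u_k}\big)$, and for one step with move $b_i=b_i^{+}-b_i^{-}$ I would use that $b_i^{+}$ and $b_i^{-}$ have disjoint supports and that $u_k\geq 0$ forces $u_{k-1}\geq b_i^{-}$, so that with $w_k:=u_{k-1}-b_i^{-}\in\mathbb{Z}_{\geq 0}^N$ one has $x^{u_{k-1}}-x^{u_k}=-x^{w_k}f_i$ (respectively $+x^{w_k}f_i$ for the move $-b_i$). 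Hence $x^u-x^v\in\langle f_1,\dots,f_n\rangle$, which by the preliminary fact equals $I_A$.

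For the converse, fix a term order $\prec$. Suppose toward a contradiction that some pair $(p,q)$ with $q\in\mathcal F(p)$ is not connected inside its fiber by $\pm\mathcal{MB}$-moves with nonnegative partial sums, and pick such a pair minimizing $\max_\prec\{x^p,x^q\}$; since a connecting path can be reversed, we may assume $x^p\succ x^q$. As $x^p-x^q\in I_A=\langle f_1,\dots,f_n\rangle$, expand $x^p-x^q=\sum_i h_if_i$ and let $x^d$ be the $\prec$-largest monomial appearing in this sum. If $x^d=x^p$, then $x^p=x^c\,x^{b_i^{\pm}}$ for some monomial $x^c$ of some $h_i$, the sign being forced to pick the larger of $x^{b_i^{+}}$ and $x^{b_i^{-}}$ (else the complementary product would exceed $x^d$); the corresponding move sends $p$ to some $p'\in\mathcal F(p)$ with $x^{p'}\prec x^p$, so $\max_\prec\{x^{p'},x^q\}\prec x^p$, and minimality makes $(p',q)$ connected, hence so is $(p,q)$ --- a contradiction. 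If instead $x^d\succ x^p$, then $x^d$ cancels in the sum, two leading products of the $h_if_i$ coincide, and a Buchberger-style $S$-polynomial manipulation rewrites $x^p-x^q$ as a combination of the $f_i$ with strictly smaller top monomial; iterating reduces to the case $x^d=x^p$. This contradiction shows every such pair is connected, i.e.\ $\mathcal{MB}$ is a Markov basis.

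The step I expect to be the real obstacle is the case $x^d\succ x^p$: because $\{f_1,\dots,f_n\}$ need not be a Gr\"obner basis for $\prec$, the leading monomial $x^p$ of $x^p-x^q$ need not be divisible by any $x^{b_i^{\pm}}$, so one cannot simply read off a move that lowers $x^p$. Extracting such a move from the cancellation of $x^d$, and choosing an induction parameter --- the top monomial of the expansion, broken by the number of terms attaining it --- that makes this descent terminate, is the technical heart of the Diaconis--Sturmfels theorem; by contrast, the grading lemma and the telescoping computation in the first direction are routine.
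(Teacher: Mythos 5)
The paper does not prove Theorem \ref{thm:FTMB}; it quotes it from \cite{diaconis}, so I am measuring your argument against the standard proof there. Your first direction is correct and is the usual one: $I_A$ is spanned by fiber binomials, and the telescoping sum together with the check that $u_k\geq 0$ forces $u_{k-1}\geq b_i^{-}$ exhibits each step as $\pm x^{w_k}f_i$. The gap is exactly the case you flag, $x^d\succ x^p$, and it is not a removable technicality: the repair you propose cannot work. Rewriting the representation until its top monomial equals $x^p$ is precisely the assertion that $x^p-x^q$ admits a standard representation, i.e.\ that $x^p$ is divisible by the $\prec$-larger monomial of some $f_i$; demanding this for all fiber binomials amounts to $\{f_1,\dots,f_n\}$ being a Gr\"obner basis for $\prec$, which a generating set of a toric ideal typically is not. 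Worse, your induction parameter is then wrong in principle: a legal move at $p$ lowers the term order exactly when $x^p$ is divisible by the $\prec$-larger monomial of some $f_i$, so for a pair where this fails \emph{every} applicable move increases $\max_\prec\{x^p,x^q\}$, and the genuine connecting path must first go up. No Buchberger-style manipulation of the representation can rescue a descent on that maximum.

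What the cancellation at $x^d$ really gives is different, and it is how \cite{diaconis} concludes: after sign normalization $x^d=x^{a_k+b_{i_k}^{+}}=x^{a_l+b_{i_l}^{+}}$ for two summands, so $d$ is itself a nonnegative point of the fiber joined by single legal moves to the two smaller points $a_k+b_{i_k}^{-}$ and $a_l+b_{i_l}^{-}$; one reroutes the walk through $d$, strengthening the induction hypothesis so that summands may be differences $x^{\alpha}-x^{\beta}$ of points already known to be connected, rather than re-expressing anything in terms of the $f_i$. A cleaner packaging avoids term orders altogether: let $H$ be the graph on $\mathbb{Z}_{\geq 0}^N$ with an edge between $w$ and $w+b_i$ whenever both are nonnegative, and let $\psi$ be the linear map sending each monomial $x^w$ to the basis vector indexed by the connected component of $w$ in $H$. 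Then $\psi\bigl(x^a(x^{b_i^{+}}-x^{b_i^{-}})\bigr)=0$ for every monomial $x^a$ and every $i$, hence $\psi$ vanishes on $\langle f_1,\dots,f_n\rangle=I_A$; applying $\psi$ to $x^u-x^v$ shows $u$ and $v$ lie in the same component of $H$, and since each $b_i\in\ker_{\mathbb Z}A$ the path stays in the fiber. This short argument replaces your entire case analysis for the converse direction.
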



\noindent Recall that a move is size $d$ if $|D_1|=|D_2|=d$.

\begin{thm}\label{thm:degreebound} If $I_{G_R}$ is generated by binomials of degree $d$ or less, then the set of tilings of a cubiculated region $R$ is connected by moves of size $d$ or less.

\end{thm}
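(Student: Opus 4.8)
The plan is to translate the hypothesis into the language of Markov bases and then invoke the Fundamental Theorem of Markov Bases (Theorem \ref{thm:FTMB}) together with the observation that $\mathcal{T}_R = \mathcal{F}(T)$ for any tiling $T$ of $R$. First I would recall that $I_{G_R} = I_A$ where $A$ is the vertex-edge incidence matrix of $G_R$, and that by hypothesis $I_{G_R}$ admits a generating set $\{g_1, \ldots, g_n\}$ consisting of binomials each of degree at most $d$. Each such generator, being in the toric ideal $I_A = \ker \phi_{G_R}$, can be written (after clearing common factors, which only decreases degree) as $g_i = y^{u_i} - y^{v_i}$ with $u_i, v_i \in \mathbb{Z}_{\geq 0}^N$ of disjoint support and $Au_i = Av_i$; since $\deg g_i \leq d$, both $|u_i| = |v_i| \leq d$ (where $|\cdot|$ denotes the sum of entries). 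Setting $b_i := u_i - v_i \in \ker_{\mathbb{Z}} A$, the set $\mathcal{MB} := \{b_1, \ldots, b_n\}$ has the property that the associated binomials generate $I_A$, so by Theorem \ref{thm:FTMB} it is a Markov basis for $A$.

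Next I would argue that $\mathcal{MB}$ gives moves of size at most $d$ connecting $\mathcal{T}_R$. By the definition of Markov basis, for any two tilings $T_1, T_2 \in \mathcal{T}_R = \mathcal{F}(T_1)$ there is a sequence $b_{k_1}, \ldots, b_{k_r} \in \pm\mathcal{MB}$ with $T_2 = T_1 + \sum_{j=1}^r b_{k_j}$ and $T_1 + \sum_{j=1}^s b_{k_j} \geq 0$ for all $s$. The crucial point, already noted in the text preceding the theorem, is that every intermediate vector $T_1 + \sum_{j=1}^s b_{k_j}$ lies in the fiber $\mathcal{F}(T_1) = \mathcal{T}_R$, hence is a genuine $0$-$1$ vector (a tiling); consequently each step necessarily has $0, 1, -1$ entries, i.e. each $b_{k_j}$ used is in $\{-1,0,1\}^N$ when applied at that stage, and corresponds to an honest tiling move $M_j = (D_1^{(j)}, D_2^{(j)})$ on $R$ with $D_1^{(j)}$ the support of the negative part and $D_2^{(j)}$ the support of the positive part. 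Since $|u_i|, |v_i| \leq d$ for every generator, each such move satisfies $|D_1^{(j)}| = |D_2^{(j)}| \leq d$, i.e. has size at most $d$. This exhibits the required sequence $M_1, \ldots, M_r$ with $T_2 = T_1 + M_1 + \cdots + M_r$ and all partial sums tilings, which is exactly the definition of $\mathcal{M}_{R_{\mathrm{flip}}}$-style connectivity with $\mathcal{M}_R = \mathcal{MB}_{sf}$.

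The one subtlety I expect to be the main obstacle — or at least the step requiring the most care — is justifying that every generator of $I_{G_R}$ may be taken to be a \emph{pure difference of squarefree monomials of disjoint support} without increasing degree, and more importantly that the intermediate multigraphs in the Markov walk are forced to be $0$-$1$ vectors. The first is handled by noting that $I_{G_R}$ is generated by binomials arising from even cycles (the bipartite structure), which are automatically squarefree with disjoint support; but for a general "generated by degree $\leq d$ binomials" hypothesis one should observe that any binomial in a toric ideal is a $\mathbb{Z}_{\geq 0}$-combination of pure binomials and that dividing out the gcd of the two monomials keeps it in the ideal and only lowers degree. The second is the observation flagged in the paragraph before the statement: a tiling is a $0$-$1$ vector, and any Markov move with an entry of absolute value $\geq 2$, or that creates an entry $\geq 2$, would leave the set of $0$-$1$ vectors and hence leave $\mathcal{T}_R$ — contradicting that all partial sums remain in the fiber. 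I would state this cleanly as a lemma or simply cite the discussion above, so that the degree bound on generators transfers directly to a size bound on moves.
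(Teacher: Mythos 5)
Your proposal is correct and follows essentially the same route as the paper: invoke the Fundamental Theorem of Markov Bases to turn the degree-$d$ binomial generating set into a Markov basis of moves of size at most $d$, and then use the observation that $\mathcal{T}_R$ is a fiber of the vertex-edge incidence matrix (so intermediate states are $0$-$1$ vectors and the moves act as genuine tiling moves). The extra care you take about clearing common monomial factors and about why only $\{-1,0,1\}$-moves can ever be applied is exactly the content of the paper's discussion preceding the theorem, so nothing essential differs.
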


\begin{proof}
Let  $A$ be the vertex-edge incidence matrix of $G_R$. Assume $I_{G_R}$ is generated by binomials of degree $d$ or less. Then  by Theorem \ref{thm:FTMB}, this means that there is a Markov basis $\mathcal{MB}$ for $A$ whose moves all have size $d$ or less. Since a Markov basis connects every fiber of $A$, and $\mathcal T_{R}$ is a fiber of $A$, the set $\mathcal {MB}$ connects $\mathcal T_{R}$. 
\end{proof}







\begin{thm} (Moves that connect tilings spaces) \label{thm:generators}
Let $R$ be a cubiculated region with the associated graph $G_R$. Let $\mathcal M_{cycles}$ be the set of moves on $\mathcal T_R$ corresponding to the set of chordless cycles of $G_R$, i.e. $$\mathcal M_{cycles} := \{b_{D_1, D_2} \ : \  D_1 \cup D_2 \text{ is a chordless cycle of  } G_R\}. $$
Then $\mathcal M_{cycles}$ connects $\mathcal T_R$. 
\end{thm}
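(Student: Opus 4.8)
The plan is to reduce the statement to a known structural result about toric ideals of graphs, namely that $I_G$ for a bipartite graph $G$ is generated by the binomials $B_{\mathcal C}$ arising from its chordless cycles. This is the content of Ohsugi--Hibi type results (cited in the excerpt as \cite{oh}): for a bipartite graph, $I_G$ is minimally generated by binomials coming from even cycles having no chord, and since every cycle of $G_R$ is even (by the Proposition above) and the only primitive even closed walks are cycles (by the Remark), the relevant generators are exactly the $B_{\mathcal C}$ for chordless cycles $\mathcal C$ of $G_R$. So the first step is simply to invoke this: $I_{G_R} = \langle B_{\mathcal C} : \mathcal C \text{ a chordless cycle of } G_R\rangle$.

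Next I would translate this generation statement into the language of Markov moves. Each chordless cycle $\mathcal C$ of $G_R$ is even, hence decomposes its edge set into two perfect matchings of its vertex set, say $D_1$ and $D_2$ (the two "color classes" along the cycle); the binomial $B_{\mathcal C}$ is then exactly $y^{D_1} - y^{D_2}$, which is the binomial arising from the tiling move $(D_1,D_2)$, and the associated vector $b_{D_1,D_2}\in\{-1,0,1\}^N$ lies in $\ker_{\mathbb Z} A$ where $A$ is the vertex-edge incidence matrix of $G_R$. Thus the set of binomials $\{B_{\mathcal C}\}$ is precisely the set of binomials $\{x^{b^+}-x^{b^-}\}$ attached to the set of vectors $\mathcal M_{cycles}$. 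Since this set of binomials generates $I_{G_R}=I_A$, Theorem~\ref{thm:FTMB} (the Fundamental Theorem of Markov Bases) tells us that $\mathcal M_{cycles}$ is a Markov basis for $A$.

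Finally I would conclude using the fiber observation already established in the text: $\mathcal T_R = \mathcal F(T)$ for any tiling $T$, i.e. the tiling space is a single fiber of $A$, and a Markov basis connects every fiber of $A$. One extra point needs care: every element of $\mathcal M_{cycles}$ already lies in $\{-1,0,1\}^N$ by construction, so there is no issue of a Markov move carrying a $0$-$1$ vector (a tiling) outside the set of $0$-$1$ vectors in the fiber — the sign-free restriction $\mathcal{MB}_{sf}$ coincides with $\mathcal M_{cycles}$ itself. Hence every two tilings $T_1,T_2$ are connected by a sequence of moves from $\mathcal M_{cycles}$ that stays inside $\mathcal T_R$ at every intermediate step, which is exactly the assertion that $\mathcal M_{cycles}$ connects $\mathcal T_R$.

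The main obstacle, and the step deserving the most care, is the precise invocation of the graph-theoretic generation result: one must make sure the version of the Ohsugi--Hibi theorem being used genuinely gives chordless cycles (as opposed to cycles with no \emph{even} chord, which is the relevant refinement in the non-bipartite case or for minimal generation), and confirm that in the bipartite setting "chordless" is the correct and sufficient hypothesis. Everything else — the matching decomposition of an even cycle, the identification of $B_{\mathcal C}$ with a move binomial, and the passage through Theorems~\ref{thm:FTMB} and the fiber identity — is routine bookkeeping.
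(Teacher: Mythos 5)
Your proposal is correct and follows essentially the same route as the paper: invoke the Ohsugi--Hibi-type result that for the bipartite graph $G_R$ the toric ideal $I_{G_R}$ is generated by binomials arising from chordless cycles, apply Theorem~\ref{thm:FTMB} to conclude $\mathcal M_{cycles}$ is a Markov basis for the vertex-edge incidence matrix, and use that $\mathcal T_R$ is a fiber of that matrix together with the observation that all of these moves already lie in $\{-1,0,1\}^N$. The paper's proof is just a terser version of the same argument, so no further comparison is needed.
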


\begin{proof} By Lemma 3.1 and 3.2 in \cite{oh}, since $I_{G_R}$ is bipartite, $I_{G_R}$ is generated by binomials arising from chordless cycles of $G_R$.  This means, by Theorem \ref{thm:FTMB}, $\mathcal M_{cycles}$ is a Markov basis for the vertex-edge incidence matrix of $G_{R}$, thus, $\mathcal M_{cycles} \subset \{-1, 0, 1\}^N$ connects $\mathcal T_{R}$.
\end{proof}

\begin{cor} \label{cor:movesdegreebound} 
Let $2d$ be the size of the largest chordless cycle in $G_R$. Then the space of tilings of $R$ is connected by moves of size $d$ or less.
\end{cor}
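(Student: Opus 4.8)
The plan is to derive Corollary~\ref{cor:movesdegreebound} directly from the two preceding results, since it is essentially a bookkeeping consequence of Theorem~\ref{thm:generators} combined with the degree count in Theorem~\ref{thm:degreebound}. First I would translate the hypothesis into algebraic language: if $2d$ is the size of the largest chordless cycle in $G_R$, then every chordless cycle $\mathcal{C}$ of $G_R$ has length at most $2d$, so the binomial $B_\mathcal{C}$ arising from $\mathcal{C}$ has the form $\prod y_{e_{2i-1}} - \prod y_{e_{2i}}$ with each monomial of degree exactly half the cycle length, hence of degree at most $d$. Therefore $B_\mathcal{C}$ is a binomial of degree at most $d$ in the sense used in Theorem~\ref{thm:degreebound}.

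Next I would invoke the generation statement underlying Theorem~\ref{thm:generators}, namely that $I_{G_R}$ is generated by the binomials arising from chordless cycles of $G_R$ (Lemmas 3.1 and 3.2 of \cite{oh}, as cited there, using that $G_R$ is bipartite). Since each such generator has degree at most $d$ by the previous paragraph, $I_{G_R}$ is generated by binomials of degree $d$ or less. Applying Theorem~\ref{thm:degreebound} then immediately gives that $\mathcal{T}_R$ is connected by moves of size $d$ or less. Alternatively, one can phrase it purely in terms of $\mathcal{M}_{cycles}$: each move $b_{D_1,D_2}$ with $D_1 \cup D_2$ a chordless cycle of length at most $2d$ satisfies $|D_1| = |D_2| \le d$, so $\mathcal{M}_{cycles}$ consists of moves of size at most $d$, and it connects $\mathcal{T}_R$ by Theorem~\ref{thm:generators}.

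There is no real obstacle here; the only point requiring a moment of care is the relation between ``size'' of a move and ``degree'' of the corresponding binomial — a move $(D_1, D_2)$ with a chordless cycle $\mathcal{C} = D_1 \cup D_2$ of length $\ell$ has $|D_1| = |D_2| = \ell/2$, so a cycle of length at most $2d$ yields a move of size at most $d$, and correspondingly a binomial whose two monomials each have degree at most $d$. I would state this correspondence explicitly in one sentence and then conclude. The proof is therefore short: identify the degree bound on the cycle binomials, apply the generation result, and cite Theorem~\ref{thm:degreebound} (or Theorem~\ref{thm:generators} directly).
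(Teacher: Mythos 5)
Your proposal is correct and matches the paper's intent: the corollary is stated there as an immediate consequence of Theorem~\ref{thm:generators} (with Theorem~\ref{thm:degreebound} as the alternative algebraic phrasing), which is exactly the argument you give. The only content beyond citation is the size/degree bookkeeping $|D_1|=|D_2|=\ell/2\le d$ for a chordless cycle of length $\ell\le 2d$, and you handle that correctly.
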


Note that Corollary \ref{cor:movesdegreebound} holds for any $R$ in any dimension. However, in many instances, especially in the 2-dimensional setting, the bound given in Corollary \ref{cor:movesdegreebound} is far from sharp.  This is due to the fact that tiling binomials can usually be written without using the larger degree generators of $I_G$, as we will see in Section \ref{sec:2D}.

\begin{example}
$\ $

{\bf (a)} The graph $G_R$ of the cubiculated region $R$ in row (a) of Figure  \ref{fig:largestcycles} has a single chordless cycle of length $10$.  This means $I_{G_R}$ is a principal ideal generated by a binomial of degree $5$. The region $R$ has exactly two tilings that are connected by the tiling move of size $5$ that corresponds to the generating binomial of $I_{G_R}$. 

{\bf(b)} The largest cycle of the graph $G_R$ of the cubiculated region $R$ in row (b) of Figure  \ref{fig:largestcycles} is length $8$.  However, this length $8$ cycle is not chordless.  In fact, $G_R$ has no chordless cycles of length $>4$.  This means $I_{G_R}$ is generated by quadratics and $\mathcal T_R$ is flip connected.  Indeed, by this same reasoning and applying Theorem \ref{thm:generators}, we can conclude if $R$ is the $2 \times \ell$ box $B_{2, \ell}$, then $\mathcal T_R$ is flip connected. 

{\bf (c)} The graph $G_R$ of the cubiculated region $R$ in row (c) of Figure  \ref{fig:largestcycles} has a chordless cycle of length $10$.  The degree $5$ binomial in $I_G$ corresponding to this cycle is an \emph{indispensable binomial} of $I_G$ \cite{reyes}, meaning that there exists a nonzero constant multiple of it in every minimal system of binomial generators of $I_G$. However, unlike the region in row (a), for this region, the space of tilings $\mathcal T_R$ is flip connected and we do not need the size $5$ move to connect the space of tilings.  

{\bf (d)} The graph $G_R$ of the cubiculated region R in row (d) of Figure  \ref{fig:largestcycles} has a chordless cycle of length $6$, which corresponds to the \emph{trit} move described in \cite{milet}. The cubic binomial in $I_G$ corresponding to this cycle is an indispensable binomial of $I_G$. However, for this region, the space of tilings $\mathcal T_R$ is flip connected and we do not need the trit move; in fact, for $R$, there is no tiling for which we can apply the trit move.  

\begin{figure}[h]
\centering
\includegraphics[scale=.35]{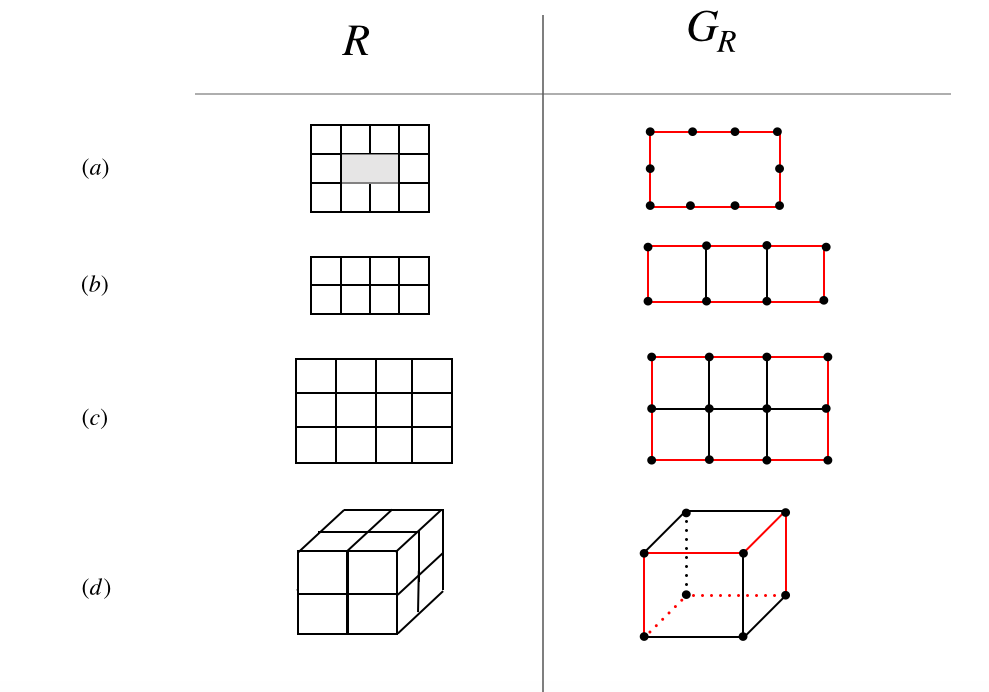}
\caption{Four cubiculated regions and their corresponding graphs with the largest cycle highlighted. }
\label{fig:largestcycles}
\end{figure}
\end{example}

\subsection{Tiling and flip ideals}

Theorem \ref{thm:generators} and Corollary \ref{cor:movesdegreebound} give a bound on the size of moves needed to connect the space of tilings of a region $R$, however, this bound can be arbitrarily large.  For example, let $m, n \geq 3$, then $G_{m,n}$ contains a chordless cycle of length $2(m+n)$.

A local flip $(D_1, D_2)$ corresponds to a $4$-cycle in $G_R$ and the corresponding binomial $y^{D_1} - y^{D_2}$ has degree $2$. Conversely, any non-zero quadratic binomial in $I_{G_R}$ must correspond to a $4$-cycle, and consequently, a flip move. Thus, to show $\mathcal T_R$ is flip connected, we need to show that every binomial arising from two tilings is generated by quadratics. 

\begin{defn}
Let $R$ be a cubiculated region with associated graph $G_R$.  The \emph{flip ideal} of $R$ is defined as follows:

$$I_{R_{flip}} := \langle y^{D_1}- y^{D_2} \ : \ (D_1, D_2) \in  \mathcal M_{flip}  \rangle \subseteq I_{G_R}.$$
The \emph{tiling ideal} of $R$ is defined as follows:

$$I_{R_{tiling}} := \langle y^{T_1} - y^{T_2} \ : T_1,\ T_2 \in \mathcal T_{R} \  \rangle \subseteq I_{G_R}.
$$

\end{defn}

Using the flip and tiling ideal, we can use the language of ideals to restate what it means for a region to be flip connected.

\begin{thm} \label{thm:flipconnected}
A tiling space $\mathcal T_r$ of a cubiculated region $R$ is flip connected if and only if 
$$I_{R_{tiling}} \subseteq I_{R_{flip}}.$$
\end{thm}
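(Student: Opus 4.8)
The plan is to prove the two implications separately, translating the combinatorial notion of ``flip connected'' into the algebraic containment of ideals via the monomial dictionary established above. Throughout, I use the correspondence between tilings of $R$ and perfect matchings of $G_R$, and the fact that a flip move $(D_1,D_2)$ corresponds exactly to a $4$-cycle of $G_R$ with $B_{D_1,D_2} = y^{D_1}-y^{D_2}$ a quadratic binomial.

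\textbf{($\Rightarrow$)} Suppose $\mathcal T_R$ is flip connected. A generator of $I_{R_{tiling}}$ has the form $y^{T_1}-y^{T_2}$ for tilings $T_1,T_2 \in \mathcal T_R$. By hypothesis there is a flip sequence $M_1,\dots,M_r \in \mathcal M_{R_{flip}}$ with $T_2 = T_1 + M_1 + \cdots + M_r$ and each intermediate $S_i := T_1 + M_1 + \cdots + M_i$ a genuine tiling (with $S_0 := T_1$, $S_r := T_2$). Writing $M_i = (D_1^{(i)}, D_2^{(i)})$, I would use the telescoping identity
$$
y^{T_1} - y^{T_2} \;=\; \sum_{i=1}^{r} \big( y^{S_{i-1}} - y^{S_i} \big),
$$
and then observe that each summand factors as $y^{S_{i-1}} - y^{S_i} = y^{S_{i-1}\setminus D_1^{(i)}}\bigl(y^{D_1^{(i)}} - y^{D_2^{(i)}}\bigr)$, using that $S_i = (S_{i-1}\setminus D_1^{(i)})\cup D_2^{(i)}$ with $D_1^{(i)} \subseteq S_{i-1}$ and $D_1^{(i)} \cap D_2^{(i)} = \nothing$ as edge sets (so the monomials multiply without collision). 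Each factor $y^{D_1^{(i)}} - y^{D_2^{(i)}}$ is a generator of $I_{R_{flip}}$, so the whole sum lies in $I_{R_{flip}}$; hence every generator of $I_{R_{tiling}}$ lies in $I_{R_{flip}}$, giving the containment.

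\textbf{($\Leftarrow$)} Suppose $I_{R_{tiling}} \subseteq I_{R_{flip}}$. Fix $T_1, T_2 \in \mathcal T_R$; I want a flip sequence connecting them. Since $y^{T_1} - y^{T_2} \in I_{R_{flip}}$, write $y^{T_1}-y^{T_2} = \sum_{k} f_k \bigl(y^{A_k} - y^{B_k}\bigr)$ with each $(A_k,B_k)$ a flip and $f_k \in \mathbb K[E]$. The idea is to pass to a combinatorial reachability argument on the fiber $\mathcal F(T_1) = \mathcal T_R$: define a graph on $\mathcal T_R$ with an edge between tilings related by a single flip, and show this graph is connected. The algebraic hypothesis should be leveraged exactly as in the standard Markov-basis/Gr\"obner argument (cf. the proof of Theorem~\ref{thm:FTMB} in \cite{diaconis}): because the flip binomials generate an ideal containing $y^{T_1}-y^{T_2}$, one can induct on a monomial order. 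Concretely, assume for contradiction that some tiling $T_2$ is not flip-reachable from $T_1$, pick among all non-reachable tilings one whose monomial $y^{T_2}$ is largest (or smallest) under a fixed term order, consider the $S$-polynomial/reduction step: since $y^{T_1}-y^{T_2} \in I_{R_{flip}}$ and $I_{R_{flip}}$ is generated by the flip binomials, some flip binomial $y^{A}-y^{B}$ has $y^{A}$ dividing the leading term, and applying that flip to $T_2$ produces a tiling $T_2'$ with $y^{T_2'}$ strictly smaller, still non-reachable from $T_1$ by minimality of choice (since flips are reversible) — contradiction once we reach $T_1$ itself.

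\textbf{Main obstacle.} The delicate point is the ($\Leftarrow$) direction: an algebraic expression $y^{T_1}-y^{T_2} = \sum f_k(y^{A_k}-y^{B_k})$ does \emph{not} a priori say that intermediate stages are genuine tilings (the partial sums might leave the $0$--$1$ fiber, or a ``flip'' $(A_k,B_k)$ might be applied at a multigraph where $A_k$ is not actually present). The resolution is that flip binomials are themselves square-free of degree $2$ and the fiber $\mathcal T_R$ consists of $0$--$1$ vectors, so — exactly as in the remark following the definition of $\mathcal{MB}_{sf}$ — any reduction path that exits $\{0,1\}^N$ cannot terminate inside $\mathcal T_R$; combined with reversibility of flips this forces the reduction to stay among tilings. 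I expect to need to state this carefully, perhaps isolating it as a small lemma that a Gr\"obner/Markov reduction by square-free quadratic binomials between two $0$--$1$ points in a common fiber can be taken to pass only through $0$--$1$ points of that fiber, and that each such step is a legal flip move in the sense of the connectivity definition.
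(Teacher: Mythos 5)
Your ($\Rightarrow$) direction is correct: the telescoping identity together with the factorization $y^{S_{i-1}}-y^{S_i}=y^{S_{i-1}\setminus D_1^{(i)}}\bigl(y^{D_1^{(i)}}-y^{D_2^{(i)}}\bigr)$ expresses each generator $y^{T_1}-y^{T_2}$ of $I_{R_{tiling}}$ in terms of flip binomials. Note also that your ``main obstacle'' has a simpler resolution than the lemma you propose: since a tiling is a perfect matching of $G_R$, the fiber $\mathcal F(T_1)$ consists of the nonnegative integer vectors $w$ with $Aw$ equal to the all-ones vector, and any such $w$ automatically has all entries $0$ or $1$ (an edge of multiplicity $\geq 2$ would force a vertex of degree $\geq 2$); hence $\mathcal F(T_1)=\mathcal T_R$ exactly, as already observed in Section~\ref{sec:toricideals}. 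So any move sequence that stays nonnegative automatically passes only through tilings, and each applicable flip is a legal flip move; no separate square-free/$\{0,1\}^N$ lemma is needed.

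The genuine gap is in the reduction argument for ($\Leftarrow$). From $y^{T_1}-y^{T_2}\in I_{R_{flip}}$ you conclude that ``some flip binomial $y^{A}-y^{B}$ has $y^{A}$ dividing the leading term'': that conclusion requires the flip binomials to be a Gr\"obner basis of $I_{R_{flip}}$ for your chosen order, not merely a generating set, and nothing in the hypothesis gives you that. What a representation $y^{T_1}-y^{T_2}=\sum_k c_k\,y^{m_k}\bigl(y^{A_k}-y^{B_k}\bigr)$ does give is that the monomial $y^{T_2}$ must occur among the expanded terms, so \emph{some} flip is applicable at $T_2$; but that flip need not decrease your term order, so choosing $T_2$ extremal among non-reachable tilings does not produce the contradiction as you set it up. The standard repair is the fiberwise argument underlying the proof of Theorem~\ref{thm:FTMB} in \cite{diaconis}, which only uses membership of $y^{T_1}-y^{T_2}$ in the ideal generated by the moves (it does not need those moves to generate all of $I_A$): among all representations by flip binomials choose one minimizing, with respect to a term order, the largest monomial appearing; show this largest monomial must be $y^{T_1}$ or $y^{T_2}$, because an interior maximal monomial must cancel and the representation could then be rewritten with a smaller maximum; then apply the corresponding flip at that endpoint and induct. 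Equivalently, one may invoke the congruence property of pure-difference binomial ideals: $y^u-y^v$ lies in the ideal generated by a set of pure-difference binomials if and only if $u$ and $v$ are joined by the corresponding moves staying nonnegative (proved by mapping $\mathbb{K}[E]$ linearly onto the span of the equivalence classes of monomials under the flip congruence). With either statement in hand, combined with $\mathcal F(T_1)=\mathcal T_R$, your argument closes. For what it is worth, the paper states Theorem~\ref{thm:flipconnected} without giving a proof, so there is no written argument to compare against; your overall strategy is the natural one, but as sketched the ($\Leftarrow$) step does not yet go through.
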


For a region $R$, both the flip ideal and the tiling ideal ideal are subideals of the toric ideal of the graph $G_R$:
$$I_{R_{tiling}} \subseteq I_{G_R}  \ \ \ \ \ \ \ \ \ \ \ \ \ \ \ \ I_{R_{flip}} \subseteq I_{G_R}.$$

\noindent When $G_R$ contains no chordless cycles of length $>4$, we have $I_{R_{flip}} = I_{G_R}$, and thus, $I_{R_{tiling}} \subseteq I_{flip}$ and $R$ is flip connected.

While $I_{R_{tiling}}$ and $I_{R_{flip}}$ are binomial ideals, unlike $I_{G_R}$ they are not always prime ideals and therefore not always toric ideals.  However, the primary decomposition of the flip ideal of a region has interesting combinatorics.  Indeed, working from an earlier version of this manuscript, in \cite{chin}, Chin explores the flip ideals of $3 \times \ell$ box regions and gives a complete description of their primary decompositions.

We now explore the three ideals $I_{G_R}$, $I_{R_{tiling}}$, and $I_{R_{flip}}$ and their possible relationships through four examples. 
\begin{example} 
Let $R = B_{2,3}$, the $2 \times 3$ box.
\begin{figure}[h]
\begin{center}
\includegraphics[scale =.5]{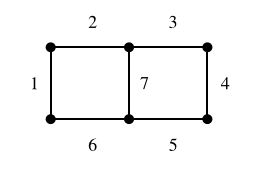}
\caption{The graph $G_{2,3}$ with edges labeled.}
\label{fig:2x3box}
\end{center}
\end{figure}
Using the labeling in Figure \ref{fig:2x3box}, we have
\begin{enumerate}
    \item[(1)] $I_{G_{R}} = \langle \; y_1y_7-y_2y_6, \; y_4y_7-y_3y_5 \;\rangle$, \\
    
    \item[(2)] $I_{R_{tiling}}= \langle \; y_1y_3y_5-y_2y_4y_6 ,\; y_1y_4y_7-y_1y_3y_5, \;y_1y_4y_7-y_2y_4y_6 \;\rangle$,\\
    
    \item[(3)] $I_{R_{flip}} = \langle \; y_4y_7-y_3y_5,\; y_1y_7-y_2y_6 \;\rangle$.\\
\end{enumerate}
\begin{figure}[H]
\centering

\includegraphics[scale=.35]{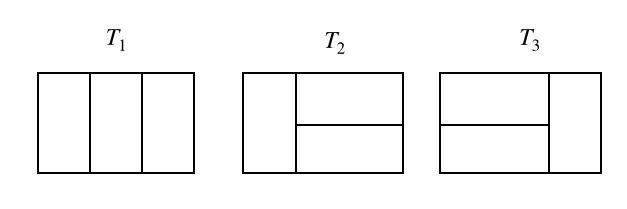} \\
\includegraphics[scale=.45]{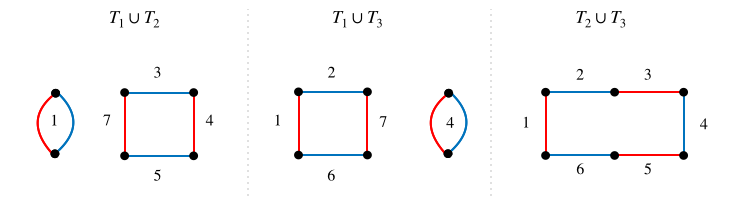}
\caption{On top are the three tilings, $T_1, T_2$, and $T_3$, of the $2 \times 3$ box.   On bottom are the three cycle covers of $G_{2,3}$ formed by the possible pairs of tilings.}
\label{fig:grid}
\end{figure}

The three tilings of $R$ are depicted in Figure \ref{fig:grid}.  Notice that the last binomial listed in the generating set of $I_{R_{tiling}}$ is not needed and we could have written $I_{R_{tiling}}= \langle  y_1y_3y_5-y_2y_4y_6 ,  y_1y_4y_7-y_1y_3y_5 \rangle$.  For this region, it is the case that
$$
I_{R_{tiling}} \subseteq I_{R_{flip}} = I_{{G}_{R}}.
$$
and thus $\mathcal T_R$ is flip connected by Theorem \ref{thm:flipconnected}.
\end{example}

\begin{example}
Now let $R$ be the $2 \times 2 \times 2 $  box. The associated grid graph with edge labels is depicted in Figure 6. 
\begin{figure}[H]
\begin{center}
\includegraphics[scale=.4]{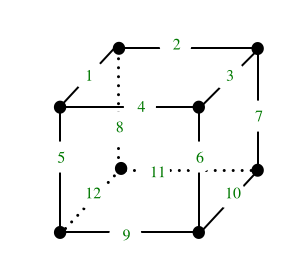}
\caption{The associated grid graph of the $2 \times 2 \times 2$ box.}
\end{center}
\label{fig:labcube}
\end{figure}
\noindent We compute the following
\begin{enumerate}

    \item[(1)] $I_{G_{R}} =  \langle  \;y_1y_3-y_2y_4, \; y_3y_{10} - y_6y_7, \; y_4y_9-y_5y_6, \; y_1y_{12}-y_5y_8, \; y_2y_{11}-y_7y_8, \; y_9y_{11}-y_{10}y_{12}, \; y_2y_5y_{10}-y_1y_9y_7, \;  y_4y_7y_{12}-y_3y_{5}y_{11}, \; y_2y_6y_{12}-y_3y_8y_9, \; y_1y_6y_{11}-y_4y_{8}y_{10} \; \rangle$\\
    
    \item[(2)]$I_{R_{tiling}} = \langle \; y_2y_5y_6y_{11}-y_5y_6y_7y_{8}, \;
    y_4y_9y_7y_{8}-y_5y_6y_7y_{8}, \;
    y_2y_4y_9y_{11}-y_4y_9y_7y_{8}, \;
    y_2y_4y_9y_{11}-y_2y_4y_{10}y_{12}, \;
     y_1y_3y_{10}y_{12}-y_2y_4y_{10}y_{12}, \;
     y_1y_3y_{10}y_{12}-y_1y_3y_{9}y_{11}, \;
     y_1y_3y_{10}y_{12}-y_1y_6y_{11}y_{12}, \;
     y_1y_3y_{10}y_{12}-y_3y_5y_{8}y_{10}
    \rangle$\\
    
    \item[(3)]$I_{R_{flip}} = \langle \; y_1y_3-y_2y_4, \; y_3y_{10} - y_6y_7, \; y_4y_9-y_5y_6, \; y_1y_{12}-y_5y_8, \; y_2y_{11}-y_7y_8, \; y_9y_{11}-y_{10}y_{12} \; \rangle. $
\end{enumerate}

For this example, while $I_{R_{flip}} \neq I_{{G}_{R}}$, we do have that $I_{R_{tiling}} \subseteq I_{R_{flip}}$.  This can be seen by noticing that every generator of $I_{R_{tiling}}$ is a monomial multiple of an element in $I_{R_{flip}}$. Since $I_{R_{tiling}} \subseteq I_{R_{flip}}$, the tiling space $\mathcal T_R$ is flip connected by Theorem \ref{thm:flipconnected}.
\end{example}

\begin{example}
In this example, let  $R = B_ {3,4}$, the $3 \times 4$  box.  The associated grid graph with edge labels is depicted in Figure \ref{fig:3x4box}.
\begin{figure}[H]
    \centering
    \includegraphics[scale=.35]{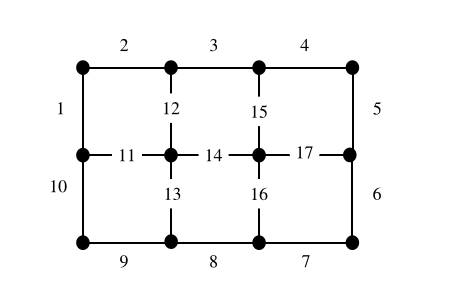}
    \caption{The grid graph associated with the $3 \times 4 $ box with edges labeled $1$ through $17$.}
    \label{fig:3x4box}
\end{figure}
\noindent We compute the following
\begin{enumerate}
    \item[(1)] $I_{G_{R}} = \langle \; 
    y_1y_{12}-y_2y_{11}, \; y_{12}y_{15}-y_3y_{14}, \; y_5y_{15}-y_4y_{17}, \; y_{10}y_{13}-y_9y_{11}, \; y_{13}y_{16}-y_8y_{14}, \; y_6y_{16}-y
    _7y_{17},\;
    y_1y_3y_{9}y_{16}-y_2y_{8}y_{10}y_{15}, \; y_4y_6y_8y_{12}- y_3y_5y_7y_{13}, \;
    y_1y_3y_5y_7y_9- y_2y_4y_6y_8y_{10} \;   \rangle$\\
    \item[(2)] $I_{R_{tiling}} = \langle \;  y_1y_{5}y_{7}y_9y_{12}y_{15}-y_2y_{4}y_7y_{9}y_{11}y_{17}, \;  y_1y_{5}y_{7}y_9y_{12}y_{15} - y_2y_{4}y_6y_8y_{10}y_{14}, \; y_1y_{5}y_{7}y_9y_{12}y_{15} - y_1y_{3}y_5y_7y_9y_{14}, \; y_1y_{5}y_{7}y_9y_{12}y_{15}-y_2y_{4}y_{6}y_{10}y_{13}y_{16}, \;   y_2y_5y_9y_7y_{11}y_{15}-y_1y_3y_{5}y_7y_9y_{14}, \;
    y_2y_5y_9y_7y_{11}y_{15} - y_2y_4y_6y_9y_{11}y_{16}, \;
    y_2y_4y_6y_9y_{11}y_{16} - y_1y_4y_7y_9y_{12}y_{17}, \;
    y_1y_4y_7y_9y_{12}y_{17} -
    y_1y_4y_6y_9y_{12}y_{16}, \;
    y_1y_4y_6y_9y_{12}y_{16} - 
    y_2y_4y_7y_{10}y_{13}y_{17}, \;
    y_2y_4y_7y_{10}y_{13}y_{17} -
    y_2y_5y_7y_{10}y_{13}y_{15}
    \rangle$\\
    
    \item[(3)] $I_{R_{flip}} = \langle \; y_1y_{12}-y_2y_{11}, \; y_{12}y_{15}-y_3y_{14}, \; y_5y_{15}-y_4y_{17}, \; y_{10}y_{13}-y_9y_{11}, \; y_{13}y_{16}-y_8y_{14}, \; y_6y_{16}-y
    _7y_{17}\; \rangle$.\\
\end{enumerate}
\end{example}
In this example, as with the previous example, $I_{R_{flip}} \neq I_{{G}_{R}}$, but  $I_{R_{tiling}} \subseteq I_{R_{flip}}$, hence, the tiling space $\mathcal T_R$ is flip connected by Theorem \ref{thm:flipconnected}.
\begin{example}
Let $R$ be the 3-dimensional region whose associated graph is pictured in Figure \ref{fig:subetrit}. 
\begin{figure}[H]
    \centering
    \includegraphics[scale=.4]{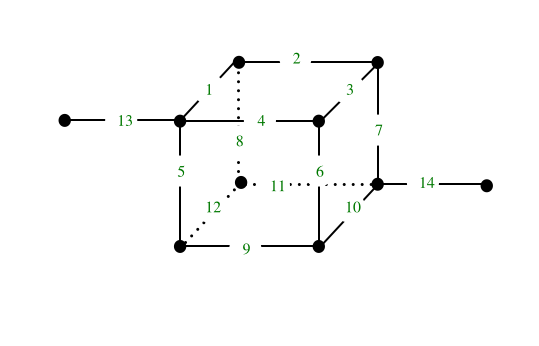}
    \caption{The labeled graph $G_R$ of a $3$-dimensional region $R$ that contains the $2 \times 2 \times 2$ cube as a subregion.}
    \label{fig:subetrit}
\end{figure}
\noindent We compute the following\\

\begin{enumerate}
    \item[(1)] $I_{G_{R}} =  \langle  \;y_1y_3-y_2y_4, \; y_3y_{10} - y_6y_7, \; y_4y_9-y_5y_6, \; y_1y_{12}-y_5y_8, \; y_2y_{11}-y_7y_8, \; y_9y_{11}-y_{10}y_{12}, \; y_2y_5y_{10}-y_1y_9y_7, \;  y_4y_7y_{12}-y_3y_{5}y_{11}, \; y_2y_6y_{12}-y_3y_8y_9, \; y_1y_6y_{11}-y_4y_{8}y_{10} \; \rangle$\\
    
    \item[(2)] $I_{R_{tiling}} = \langle  \; y_2y_6y_{12}y_{13}y_{14}-y_3y_9y_8y_{13}y_{14} \; \rangle$ \\
    
    \item[(3)] $I_{R_{flip}} = \langle \; y_1y_3-y_2y_4, \; y_3y_{10} - y_6y_7, \; y_4y_9-y_5y_6, \; y_1y_{12}-y_5y_8, \; y_2y_{11}-y_7y_8, \; y_9y_{11}-y_{10}y_{12} \; \rangle. $\\
\end{enumerate}

In this example, there are only two tilings of $R$ and thus $I_{R_{tiling}}$ is a principal ideal. The tiling ideal is not contained in the flip ideal, and hence, the tiling space is not flip connected.  Indeed, a trit move is needed to connect the space, which can be seen by noting that the single generator of $I_{R_{tiling}}$ can be factored into a monomial and cubic trit binomial 
$$y_{13}y_{14}(y_2y_6y_{12}-y_3y_9y_8).$$
\end{example}

\subsection{Tiling binomials in terms of cycle covers}

Recall that for two tilings $T_1$ and $T_2$ of $R$, their union $T_1 \cup T_2$ is a cycle cover of $G_R$. In this section, we state and prove a lemma regarding such cycle covers that will be helpful in giving an algebraic proof of Theorem \ref{thm:Thurston}.

Let $T_1$ and $T_2$ be two tilings of a cubiculated region $R$ with corresponding cycle cover $T_1 \cup T_2 = \mathcal C=\{\mathcal C_1, \ldots, \mathcal C_r\}$ of $G_R$.  We define the cycle binomial $B_{\mathcal C_i}$ corresponding to the cycle $\mathcal C_i$ as follows. Construct a closed walk $w_i$ on each $\mathcal C_i \in \mathcal C$ by starting with an edge in $T_1$ and then walking in either direction. Then the cycle binomial $B_{\mathcal C_i}$ is the binomial arising from the walk $w_i$:

$$B_{\mathcal C_i}:= B_{w_i}=\prod_{e \in T_1 \cap \mathcal C_i} y_e - \prod_{e \in T_2 \cap \mathcal C_i} y_e.$$ 

\begin{lemma} \label{lem:cycles} Let $T_1$ and $T_2$ be two tilings of a cubiculated region $R$ with corresponding cycle cover $T_1 \cup T_2 = \mathcal C=\{\mathcal C_1, \ldots, \mathcal C_r\}$ of $G_R$. Then $B_{T_1,T_2}$ can be written as the sum of $r$ binomials where the $i$th binomial can be factored into a monomial and the cycle binomial $B_{\mathcal C_i}$.
\end{lemma}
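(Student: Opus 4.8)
The plan is to reduce the statement to a routine telescoping identity for monomials, once the combinatorial setup is in place. First I would record that, since $\mathcal C=\{\mathcal C_1,\dots,\mathcal C_r\}$ is a cycle cover of $G_R$ coming from the $2$-regular multigraph $T_1\cup T_2$, every edge of $T_1$ and every edge of $T_2$ lies in exactly one cycle $\mathcal C_i$. Writing $A_i:=T_1\cap\mathcal C_i$ and $B_i:=T_2\cap\mathcal C_i$, this gives disjoint unions $T_1=\bigsqcup_{i=1}^r A_i$ and $T_2=\bigsqcup_{i=1}^r B_i$, hence in monomial shorthand $y^{T_1}=\prod_{i=1}^r y^{A_i}$ and $y^{T_2}=\prod_{i=1}^r y^{B_i}$. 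Moreover, because $T_1$ and $T_2$ are matchings, the edges around each even cycle $\mathcal C_i$ alternate between $T_1$ and $T_2$, so the closed walk $w_i$ used to define the cycle binomial has $T_1$-edges in its odd positions and $T_2$-edges in its even positions; consequently $B_{\mathcal C_i}=y^{A_i}-y^{B_i}$ (and $B_{\mathcal C_i}=0$ whenever $\mathcal C_i$ is a $2$-cycle, which causes no trouble).

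Next I would invoke the telescoping identity: for commuting ring elements $a_1,\dots,a_r$ and $b_1,\dots,b_r$,
$$\prod_{i=1}^r a_i-\prod_{i=1}^r b_i=\sum_{i=1}^r\Big(\prod_{j<i}b_j\Big)(a_i-b_i)\Big(\prod_{j>i}a_j\Big),$$
which is verified by writing the right-hand side as $\sum_{i=1}^r (P_i-P_{i+1})=P_1-P_{r+1}$ for the partial products $P_i:=\big(\prod_{j<i}b_j\big)\big(\prod_{j\ge i}a_j\big)$, noting $P_1=\prod a_i$ and $P_{r+1}=\prod b_i$. Specializing $a_i=y^{A_i}$, $b_i=y^{B_i}$ and using the identifications from the previous paragraph yields
$$B_{T_1,T_2}=y^{T_1}-y^{T_2}=\sum_{i=1}^r m_i\,B_{\mathcal C_i},$$
where $m_i:=\big(\prod_{j<i}y^{B_j}\big)\big(\prod_{j>i}y^{A_j}\big)=y^{\,T_2\cap(\mathcal C_1\cup\cdots\cup\mathcal C_{i-1})}\cdot y^{\,T_1\cap(\mathcal C_{i+1}\cup\cdots\cup\mathcal C_r)}$ is a monomial. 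This is exactly the claimed decomposition: $B_{T_1,T_2}$ is a sum of $r$ binomials, the $i$-th of which, $m_iB_{\mathcal C_i}$, is a monomial times the cycle binomial $B_{\mathcal C_i}$.

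I do not expect any genuine obstacle here; the only points needing a careful word are the bookkeeping that the cycles $\mathcal C_i$ truly partition the edge sets of $T_1$ and of $T_2$ (so the monomial products combine as stated) and the observation that alternation of matching edges around each cycle makes the walk-based cycle binomial coincide with $y^{A_i}-y^{B_i}$. Once these are established, the telescoping identity does all the work. I would also remark that the particular monomials $m_i$ depend on the chosen ordering of the cycles in $\mathcal C$, but that the conclusion holds for any such ordering.
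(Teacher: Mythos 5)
Your proposal is correct and follows essentially the same route as the paper: your monomials $m_i = y^{T_2\cap(\mathcal C_1\cup\cdots\cup\mathcal C_{i-1})}\cdot y^{T_1\cap(\mathcal C_{i+1}\cup\cdots\cup\mathcal C_r)}$ coincide with the paper's $m_i = y^{(T_1\setminus(\mathcal C_1\cup\cdots\cup\mathcal C_i))\cup(T_2\cap(\mathcal C_1\cup\cdots\cup\mathcal C_{i-1}))}$, since the cycles partition the edges of $T_1$ and $T_2$. The only difference is that you spell out the telescoping verification that the paper leaves implicit, which is a welcome addition rather than a departure.
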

\begin{proof} We begin by describing the $i$th monomial that appears in the sum described by the lemma. Let
$$m_1  = y^{T_1 \setminus \mathcal C_1},$$
and for $i = 2, \ldots, r$, let 
$$m_i = y^{(T_1 \setminus (\mathcal C_1 \cup  \cdots \cup \mathcal C_i)) \bigcup (T_2 \cap (\mathcal C_1 \cup \cdots \cup \mathcal C_{i-1}))}.$$
Then, we can write $B_{T_1, T_2}$ as
$$B_{T_1, T_2} = m_1B_{\mathcal C_1} + m_2B_{\mathcal C_2} + \cdots + m_rB_{\mathcal C_r}.$$

\end{proof}

\begin{remark} Note that the  binomial arising from a $2$-cycle $\mathcal C_i$ has the following form $$y_j - y_j.$$
Therefore adding $B_{\mathcal C_i}$ is equivalent to adding zero.  Thus, we can obtain a similar statement to Lemma \ref{lem:cycles} by letting $\mathcal C$ be all cycles of $T_1 \cup T_2$ of length greater than 2. 
\end{remark}

\begin{example}\label{twoperfmatch}

Let $R$ be the $2 \times 5$ box and consider the two tilings $T_1$ and $T_2$ pictured in Figure \ref{fig:twoperfmatch}. The cycle cover $\mathcal C = \{ \mathcal C_1, \mathcal C_2\}$ of $G_R$ induced by $ T_1 \cup T_2$ is also shown in Figure \ref{fig:twoperfmatch}.

\begin{figure}[h]
\centering
\includegraphics[scale=.4]{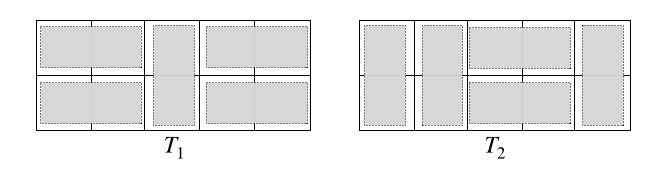} \vspace{2pt} \includegraphics[scale = .4]{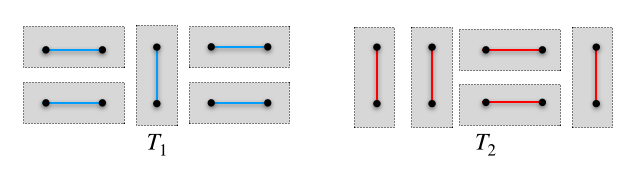}
\vspace{2pt}
\includegraphics[scale=.5]{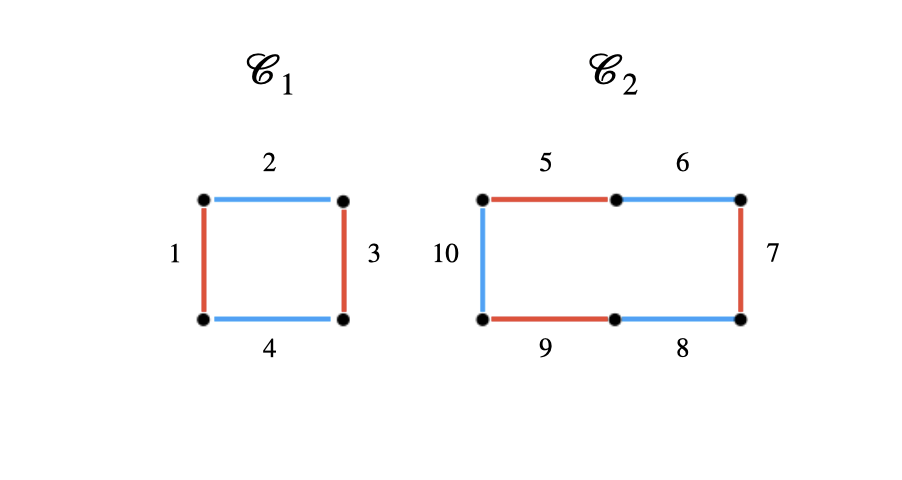}
\caption{The top row shows two tilings, $T_1$ and $T_2$, of the $2 \times 5$ box.  The bottom row shows the two cycles formed when taking the union of $T_1$ and $T_2$ as edge sets of $G_R$. }
\label{fig:twoperfmatch}
\end{figure}
The binomial arising from $(T_1, T_2)$ is
$$B_{T_1, T_2} = y_1y_3y_5y_7y_9 - y_2y_4y_6y_8y_{10}.
$$
Using Lemma \ref{lem:cycles}, we can write $B_{T_1, T_2}$ in terms of $B_{\mathcal C_1}=y_1y_3-y_2y_4$ and $B_{\mathcal C_2}=y_5y_7y_9-y_6y_8y_{10}$ as follows

$$
B_{T_1, T_2} = y_5y_7y_9(y_1y_3-y_2y_4) + y_2y_4(y_5y_7y_9-y_6y_8y_{10}).
$$
Notice that the factored monomials in this sum have the form described in the proof of Lemma \ref{lem:cycles}.  In particular, working on the labels of the indeterminates appearing in each monomial, we have

$$\{5, 7, 9\} = T_1 \setminus \mathcal C_1 =  \{1, 3, 5, 7, 9\} \setminus \{1,2,3,4\},$$

and

$$\{2, 4\} = (T_1 \setminus (\mathcal C_1 \cup C_2)) \cup (T_2 \cap \mathcal C_1) = \emptyset \cup  (T_2 \cap  \mathcal C_1)
=\{2, 4, 6, 8, 10\} \cap \{1,2,3,4\}. $$

\end{example}

\section{Connectivity of Tilings of $2$-Dimensional Regions } \label{sec:2D}

We conclude this paper by showing any binomial arising from two tilings of a $2$-dimensional simply connected cubiculated region $R$ is generated by quadratics.  This allows us to prove that the space of tilings of $R$ is flip connected.

\begin{defn}
A region $R$ is said to be \emph{simply connected} if any simple closed curve can be shrunk to a point continuously in the set.

\begin{remark}
A $2$-dimensional region is simply connected if it  has no holes.
\end{remark}
\end{defn}


\begin{thm}\label{tilemain} Let $R$ be a $2$-dimensional simply connected cubiculated region. Any binomial arising from two distinct tilings of $R$ is generated by quadratics. In particular, $I_{R_{tiling}} \subseteq I_{R_{flip}}.$
\end{thm}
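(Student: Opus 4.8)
The plan is to reduce the statement, via Lemma~\ref{lem:cycles}, to the case of a single cycle: since $B_{T_1,T_2}$ is a sum of monomial multiples of the cycle binomials $B_{\mathcal C_i}$, and each $\mathcal C_i$ is a chordless-or-not even cycle embedded in the planar grid graph $G_R$, it suffices to show that for a single even cycle $\mathcal C$ bounding a region inside a simply connected $R$, the binomial $B_{\mathcal C}$ lies in $I_{R_{flip}}$. Equivalently, I want to show that the two perfect matchings of the interior of $\mathcal C$ given by the ``red'' edges ($T_1\cap\mathcal C$) and the ``blue'' edges ($T_2\cap\mathcal C$) can be connected by flips, where crucially the flips are allowed to use chords of $\mathcal C$ — edges of $G_R$ interior to the cycle — which exist precisely because $R$ is simply connected and hence the entire planar region enclosed by $\mathcal C$ is part of $G_R$.

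The key geometric step is an induction on the area enclosed by the cycle $\mathcal C$ (number of unit squares, equivalently number of interior lattice points plus boundary contributions via Pick's theorem). For the base case, a cycle of length $4$ gives directly a flip, so $B_{\mathcal C}\in I_{R_{flip}}$ by definition. For the inductive step, I would take a cycle $\mathcal C$ of length $\geq 6$ and find a chord $e$ of $\mathcal C$ in $G_R$ that splits it into two strictly smaller even cycles $\mathcal C'$ and $\mathcal C''$ — this is where I must use that $R$ is $2$-dimensional and simply connected, so the interior is ``filled in'' and such a chord is available (e.g. take a minimal-length edge path through the interior, or use a corner of the bounding box of $\mathcal C$). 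One must check the chord can be chosen to be an \emph{even} chord so both pieces are even cycles. Then I would argue that $B_{\mathcal C}$ can be rewritten as a combination $y^{A}B_{\mathcal C'} + y^{B}B_{\mathcal C''}$ modulo lower-degree relations — concretely, introduce the intermediate matching that agrees with $T_1$ on one side of $e$ and with $T_2$ on the other, telescoping $y^{T_1\cap\mathcal C} - y^{T_2\cap\mathcal C}$ through it, exactly as in the proof of Lemma~\ref{lem:cycles} but now applied to the splitting of one cycle by a chord rather than the splitting of a cycle cover into cycles. This requires that both the ``agrees with $T_1$ on one piece'' configuration and the original matchings restrict to perfect matchings on each sub-cycle's interior, which follows because $\mathcal C'$ and $\mathcal C''$ each bound a simply connected sub-region tiled compatibly; then the two halves $B_{\mathcal C'}$ and $B_{\mathcal C''}$ are handled by the inductive hypothesis, landing in $I_{R_{flip}}$.

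The main obstacle I anticipate is the bookkeeping needed to guarantee that a splitting chord always exists \emph{and} that the restricted configurations are genuinely perfect matchings of the sub-regions — i.e. that the red edges and blue edges of $\mathcal C$, together with the chord, decompose cleanly. A cycle $\mathcal C$ in the grid need not be the boundary of its ``inside'' in a naive way if it is not a simple closed curve with filled interior, but since $G_R$ is planar, bipartite, and $R$ is simply connected, every cycle $\mathcal C$ of $G_R$ \emph{does} bound a simply connected union of unit squares all of which lie in $R$; I would state this as a preliminary lemma (perhaps via the Jordan curve theorem for the grid) and then the area induction goes through. A secondary subtlety is ensuring the intermediate ``mixed'' edge set actually has the right degree sequence on each sub-region, which is where the parity of the chord ($j-i$ odd, an even chord) is used; choosing the chord to connect two vertices of $\mathcal C$ at odd combinatorial distance is always possible along any interior lattice path since $G_R$ is bipartite. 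Once these structural facts are in hand, the algebraic telescoping is routine and mirrors Lemma~\ref{lem:cycles} exactly.
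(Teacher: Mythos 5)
Your reduction via Lemma~\ref{lem:cycles} and your telescoping of a cycle across an even chord are sound (and the parity worry is automatic: in a bipartite graph every chord of an even cycle is an even chord); that part matches the paper's Lemma~\ref{lem:contractiblecycles}. The genuine gap is the premise of your inductive step, that every even cycle of length at least $6$ in $G_R$ admits a chord. This is false once the cycle encloses interior vertices: the boundary cycle of an $m\times n$ box with $m,n\ge 3$ is chordless (the paper itself notes that $G_{m,n}$ contains a chordless cycle of length $2(m+n)$), and such cycles really do occur as components of $T_1\cup T_2$ --- take both tilings to contain the same dominoes matching the interior vertices and to alternate around the boundary. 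A path through the interior is not a chord, and at a corner of the bounding box the vertex you would like to connect to may not lie on the cycle at all, so neither of your suggested constructions produces a chord, and the area induction cannot start on such a cycle. A second, related problem is the reduction to the bare cycle binomial $B_{\mathcal C}$: since $I_{R_{flip}}$ is not prime, membership of a monomial multiple of $B_{\mathcal C}$ and of $B_{\mathcal C}$ itself are not interchangeable, and the discarded monomial records exactly the shared interior dominoes (the $2$-cycles) that any flip sequence inside the cycle must use; note also that $T_1\cap\mathcal C$ is a perfect matching of the vertices of $\mathcal C$ only, not of the region it encloses.

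The paper closes precisely this hole with Lemma~\ref{lem:perimetertocontractible}: it first performs flips that involve the interior doubled edges, reducing the number of vertices interior to the cycles until every cycle is contractible (encloses no vertices), recording each flip as a quadratic times a monomial. Only then does the chord argument apply, and there it always works: a contractible cycle of length $\ge 6$ bounds at least two unit squares, so some internal grid edge has both endpoints on the cycle and is a chord (this is the third case at the north-west corner in Lemma~\ref{lem:contractiblecycles}; the other two cases are handled by a length-reducing flip). If you add a step of this kind --- flips through the interior $2$-cycles to empty the interior before splitting --- your induction can then proceed essentially as you wrote it.
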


Our proof of Theorem \ref{tilemain} relies on a couple of lemmas that we will prove first.

\begin{defn}
Let $R$ be a $2$-dimensional simply connected cubiculated region with graph $G_R$.  We call a cycle $\mathcal C_1$ of $G_R$ a \emph{contractible cycle} if, when $G_R$ is drawn in the plane as a grid graph, the interior of $\mathcal C_1$ contains no vertices.
\end{defn}

\noindent Contractible cycles can be regarded as Hamiltonian cycles of a subgraph $G_{R'}$ of $G_{R}$ where $R'$ is a simply connected subregion of $R$.  Indeed, if $R$ is simply connected, then any Hamiltonian cycle of $G_{R}$ is a contractible cycle.  Moreover, when $R$ is a $2$-dimensional simply connected region, any $2$-cycle of $G_R$ is a contractible cycle.

\begin{defn}
Let $R$ be a $2$-dimensional simply connected cubiculated region with graph $G_R$.  We call a cycle $\mathcal C_0$ of $G_R$ a \emph{perimeter cycle} if, when $G_R$ is drawn in the plane as a grid graph, the interior of $\mathcal C_0$ contains no cycles or only $2$-cycles. (The name \emph{perimeter} comes from that fact that if $T_1 \cup T_2$ contains a single perimeter cycle and no other cycles besides $2$-cycles, then the tilings $T_1$ and $T_2$ only differ on the perimeter of a simply connected subregion.)
\end{defn}

Note that a contractible cycle is a perimeter cycle, and thus, a $2$-cycle is a perimeter cycle.

\begin{lemma}\label{lem:perimetertocontractible}
Let $T_1$ and $T_2$ be two tilings of a simply connected $2$-dimensional cubiculated region $R$ such that $T_1 \cup T_2$ is a collection of perimeter cycles $\mathcal C_1, \ldots, \mathcal C_r$.  There exists a sequence of flip moves $(D_{1_1}, D_{2_1}), \ldots, (D_{1_s}, D_{2_s})$ that takes $T_1$ to $T_1'$ and a sequence of flip moves $(D'_{1_1}, D'_{2_1}), \ldots, (D'_{1_t}, D'_{2_t})$ that takes $T_2$ to $T_2'$ such that $T_1' \cup T_2'$ is a collection of contractible cycles. In particular, 
 \begin{align*}
 B_{T_1, T_2} &= y^{T_1 \setminus D_{1_1}}(y^{D_{1_1}} - y^{D_{2_1}}) + \ldots + y^{T_1' \setminus D_{2_s}}(y^{D_{1_s}} - y^{D_{2_s}}) \\ &+ B_{T_1', T_2'}\\ &+ y^{T_2' \setminus D'_{2_t}}(y^{D'_{2_t}} - y^{D'_{1_t}}) + \ldots  + y^{T_2 \setminus D'_{1_1}}(y^{D'_{2_1}} - y^{D'_{1_1}})
 \end{align*}
where each binomial of the form $y^{D_{1_i}} - y^{D_{2_i}}$ or $(y^{D_{2_i}} - y^{D_{1_i}})$ has degree $2$.
 \end{lemma}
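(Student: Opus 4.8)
\textbf{Proof proposal for Lemma \ref{lem:perimetertocontractible}.}

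The plan is to induct on the number of vertices strictly inside the perimeter cycles. If $T_1 \cup T_2$ is already a collection of contractible cycles we are done, so suppose some perimeter cycle $\mathcal C_i$ contains at least one vertex in its interior. By the definition of perimeter cycle, any such interior vertex lies on a $2$-cycle of $T_1 \cup T_2$, i.e. on an edge $e$ common to $T_1$ and $T_2$; in fact every vertex interior to $\mathcal C_i$ is matched by the same edge in both tilings, so the interior of $\mathcal C_i$ is covered by a common sub-matching. First I would use simple connectedness of the subregion bounded by $\mathcal C_i$ together with this observation to locate a vertex $v$ interior to $\mathcal C_i$ that is adjacent (in the grid graph) to a vertex $u$ on $\mathcal C_i$ itself, with the edge $\{u,v\}$ a chord of $\mathcal C_i$; the two edges of $T_1$ and the two edges of $T_2$ incident to $u$ and $v$ at such a "boundary" spot form a $4$-cycle (the edge on $\mathcal C_i$ at $u$, the edge $\{u,v\}$, and the common matching edge at $v$), and this $4$-cycle supports a flip.

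The key steps, in order: (1) show that because the interior of $\mathcal C_i$ contains only $2$-cycles, $T_1$ and $T_2$ agree on the interior, so a single flip applied to $T_1$ (chosen at a vertex $v$ adjacent to $\mathcal C_i$) pushes $v$ out of the "strictly interior" set by swapping $v$'s common matching edge for the edge $\{u,v\}$; (2) verify that after this flip the new union $T_1' \cup T_2$ is still a collection of perimeter cycles — the flip merges $v$ into the cycle through $u$ and deletes one $2$-cycle, and simple connectedness guarantees no new enclosed cycle is created — with strictly fewer interior vertices; (3) iterate on $T_1$ until no interior vertex of any cycle survives, giving the flip sequence $(D_{1_1},D_{2_1}),\ldots,(D_{1_s},D_{2_s})$ and the tiling $T_1'$, and observe that by construction $T_1' \cup T_2$ consists of contractible cycles, so in fact $T_2' = T_2$ and $t = 0$ — though keeping the $T_2$-side moves in the statement costs nothing and matches the symmetric formulation; (4) assemble the telescoping identity for $B_{T_1,T_2}$ by the standard trick already used in Lemma \ref{lem:cycles}: each flip $(D_{1_j},D_{2_j})$ contributes $y^{T^{(j)}\setminus D_{1_j}}(y^{D_{1_j}} - y^{D_{2_j}})$ where $T^{(j)}$ is the intermediate tiling, these sum telescopically from $B_{T_1,T_2}$ down to $B_{T_1',T_2'}$, and the $y^{D_{1_j}} - y^{D_{2_j}}$ are quadratic because flips correspond to $4$-cycles.

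The main obstacle is step (1)–(2): making precise that a perimeter cycle over a simply connected region always admits a flip at its boundary that strictly decreases the interior vertex count \emph{without} destroying the perimeter structure. One must rule out the flip creating a pinch that encloses a larger cycle, and one must guarantee the existence of an interior vertex adjacent to the boundary cycle — both follow from planarity and simple connectedness of the enclosed subregion (a region with no holes whose every interior lattice point is "safely" matched), but the bookkeeping of which four edges form the flip $4$-cycle, and checking the colored ($T_1$-red, $T_2$-blue) cycle cover transforms as claimed, is the delicate part. Everything after that — the telescoping sum and the degree-$2$ claim — is a direct transcription of the computation in the proof of Lemma \ref{lem:cycles}.
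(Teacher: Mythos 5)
Your induction skeleton (induct on the number of vertices interior to the perimeter cycles, peel off interior $2$-cycles by flips, then telescope the flip binomials) is the same as the paper's, and your step (4) is indeed a routine transcription of the Lemma \ref{lem:cycles} computation. But the core of the lemma is exactly the part you flag as "the delicate bookkeeping," and as stated your progress step has a genuine gap: you assume there is always an interior vertex $v$ adjacent to a cycle vertex $u$ such that $v$'s common ($2$-cycle) edge, the edge $\{u,v\}$, and the cycle edge of $T_1$ at $u$ close up into a $4$-cycle supporting a flip on $T_1$. That forces the $2$-cycle at $v$ to be parallel to, and directly alongside, an edge of $\mathcal C_1$ lying in $T_1$. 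Nothing guarantees this configuration: every interior $2$-cycle adjacent to $\mathcal C_1$ may be perpendicular to the nearby cycle edges, in which case the three edges you name do not complete to a $4$-cycle and no flip at that spot removes $v$. The paper's proof splits precisely along this line: Case 1 (a $2$-cycle parallel to $\mathcal C_1$) is your flip and does reduce the interior count by $2$; Case 2 (all adjacent $2$-cycles perpendicular) requires a different move, namely a flip on two edges $e_1,e_2$ of $\mathcal C_1$ itself lying on the same side of a $2$-cycle --- they belong to the same tiling because the chord joining their endpoints is even --- and this flip does \emph{not} reduce the interior vertex count; it splits $\mathcal C_1$ into two perimeter cycles, after which Case 1 applies. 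A finiteness/extremal argument (the paper's "otherwise one builds an infinite chain of $2$-cycles marching north/east") is what guarantees Case 2's configuration exists; your appeal to "planarity and simple connectedness" does not supply this, and your induction measure (interior vertices alone) does not obviously terminate once Case-2-type moves are admitted.

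A secondary inaccuracy: because the cycle edge involved in a Case 1 flip may belong to $T_2$ rather than $T_1$ (and in Case 2 the pair $e_1,e_2$ may lie in either tiling), flips must in general be applied on both sides; your claim that one can always take $T_2'=T_2$ and $t=0$ is unsupported and likely false, which is exactly why the lemma is stated with two flip sequences. Also, the edge $\{u,v\}$ with $v$ interior is not a chord of $\mathcal C_1$ (chords join two vertices of the cycle); the even-chord argument in the paper is applied to the chord between the two cycle vertices flanking the perpendicular $2$-cycle, not to $\{u,v\}$.
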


\begin{proof}
Let $k$ be the number of vertices contained in the interiors of $\mathcal C_1, \ldots, \mathcal C_r$ when $G_R$ is drawn in the plane as a grid graph.  We will induct on $k$.

For the base case, assume $k = 0$. Then $\mathcal C_1, \ldots, \mathcal C_r$ are all contractible cycles. Therefore, $T_1 \cup T_2$ is a collection of contractible cycles. 

Now suppose the statement is true for up to $k-1$ internal vertices and assume the cycles in $\mathcal C$ have $k>0$ internal vertices. Since $k>0$, the interior of at least one of $\mathcal C_1, \ldots, \mathcal C_r$ contains at least one 2-cycle, let's assume $\mathcal C_1$ contains at least one 2-cycle. We will consider three cases based on the positions of the interior $2$-cycles.\\
\\
\noindent \begin{it} 
Case 1: An interior $2$-cycle is parallel to $\mathcal{C}_1$. 
\end{it}

\begin{figure}[h]
\begin{center}
    \includegraphics[scale = .35]{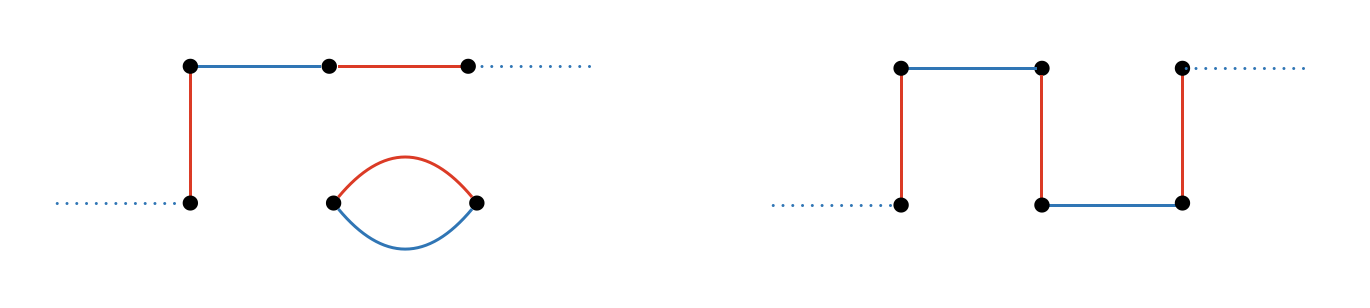}
    \caption{A local flip is  performed on with the  $2$-cycle parallel to $\mathcal{C}_1$.} 
    \label{fig:Case1}
\end{center}
\end{figure}

In this case we can perform a local flip the edge parallel to the $2$-cycle as shown in Figure \ref{fig:Case1}. This yields a decrease in the number of internal vertices by $2$, and then, we can apply the induction hypothesis.\\ 
\\
\begin{it}
Case $2$:  There exists an interior $2$-cycle that is not parallel to $\mathcal{C}_1$. 
\end{it}

\begin{figure}
    \centering
    \includegraphics[scale=.30]{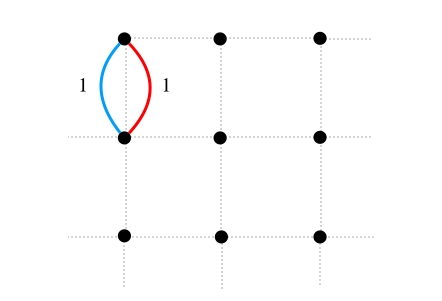}
    \includegraphics[scale=.30]{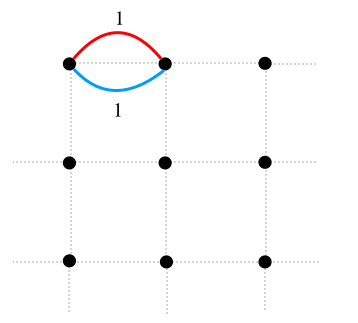} 
    \caption{On the left is a \emph{north-south} 2-cycle.  On the right is an \emph{east-west} $2$-cycle.}
    \label{fig:2cycles}
\end{figure}

$\ $ \\
Let us categorize $2$-cycles into two types: north-south cycles and east-west cycles (see Figure \ref{fig:2cycles}).  In the case where there is no $2$-cycle parallel to $\mathcal C_1$, then (i) there exists a east-west cycle such that the two vertices in $G_R$ directly north of the $2$-cycle or the two vertices in $G_R$ directly south of the $2$-cycle are both contained in $\mathcal C_1$ or (ii) there exists a north-south cycle such that the two vertices in $G_R$ directly east of the $2$-cycle or the two vertices in $G_R$ directly west of the $2$-cycle are both contained in $\mathcal C_1$. We note that if either $(i)$ or $(ii)$ does not hold, then starting at any interior $2$-cycle, there is an infinite sequence of $2$-cycles that can be constructed by choosing the $2$-cycle that covers at least one vertex to the north or east of the previous cycle, and thus $R$ is not finite.

Without loss of generality, let's assume that there is a $2$-cycle $\mathcal C_2$ of the form described in situation (i) such that the two vertices directly north of the $\mathcal C_2$ are both contained in $G_R$. In this situation, $\mathcal C_1$ must transverse the vertices north of $\mathcal C_2$ in the way illustrated in Figure \ref{fig:notparallel}.  Note that the edges $e_1$ and $e_2$ in Figure \ref{fig:notparallel} must be from the same tiling since every chord of $\mathcal C_1$ must be even. After performing a local flip, $\mathcal C_1$ is split into two cycles, $\mathcal C_1'$ that contains $\mathcal C_2$ and $\mathcal C_2'$ that contains no vertices or only $2$-cycles and thus is a new perimeter cycle; see Figure \ref{fig:notparallelafterflip}. We have not reduced the number of interior vertices, however, we now meet the conditions of Case 1 and can proceed accordingly.



\end{proof}

\begin{figure}[h]
\centering
\includegraphics[scale=.5]{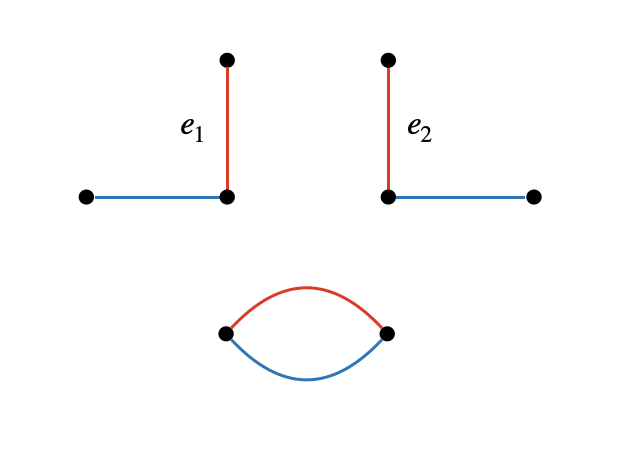}
\caption{An east-west cycle such that the two vertices in $G_R$ directly north of the $2$-cycle are traversed by $\mathcal C_1$.}
\label{fig:notparallel}
\end{figure}

\begin{figure}[h]
\centering
\includegraphics[scale=.5]{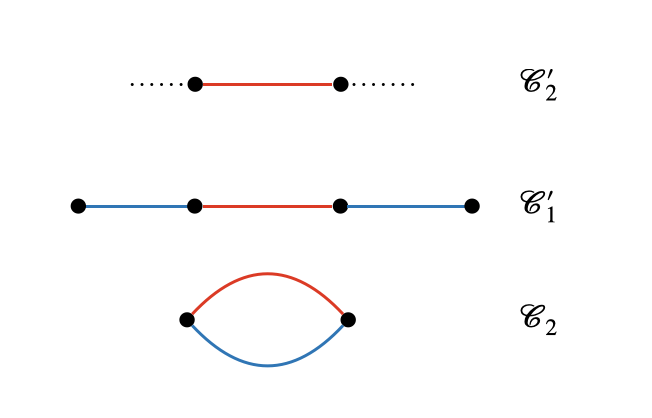}
\caption{Configuration from Figure \ref{fig:notparallel} after performing a local flip.}
\label{fig:notparallelafterflip}
\end{figure}

\begin{lemma} \label{lem:contractiblecycles} Let $R$ be a simply connected $2$-dimensional cubiculated region and let $T_1, T_2$ be two tilings such that $T_1 \cup T_2$ contains a single contractible cycle of $G_R$ of length $\geq 4$ and no other cycles besides $2$-cycles.  Then $B_{T_1, T_2}$ is generated by quadratics.
\end{lemma}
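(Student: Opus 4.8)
The plan is to first peel off the $2$-cycles and reduce to a single cycle binomial, then prove that the cycle binomial of a contractible cycle lies in $I_{R_{flip}}$ by induction on its length. Since $T_1\cup T_2$ consists of one contractible cycle $\mathcal C$ of length $\geq 4$ together with some $2$-cycles, Lemma \ref{lem:cycles} and the remark following it give $B_{T_1,T_2}=m\,B_{\mathcal C}$ for a monomial $m$, where $B_{\mathcal C}=y^{M_1}-y^{M_2}$ and $M_1,M_2$ are the two perfect matchings of $\mathcal C$. As $I_{R_{flip}}$ is an ideal, it therefore suffices to prove the following claim: for every contractible cycle $\mathcal C$ of $G_R$ of length $\geq 4$, one has $B_{\mathcal C}\in I_{R_{flip}}$. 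I would prove the claim by strong induction on the length $2k$ of $\mathcal C$. When $2k=4$, $\mathcal C$ is a $4$-cycle of $G_R$, i.e.\ a $2\times 2$ block of cells of $R$; its two perfect matchings are exactly the two pairs of parallel dominoes covering that block, so $B_{\mathcal C}$ is by definition one of the generators of $I_{R_{flip}}$ and we are done.

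For the inductive step let $2k\geq 6$, and let $\Omega$ be the open region bounded by the Jordan curve $\mathcal C$ when $G_R$ is drawn in the plane as a grid graph. The first key point is that $\mathcal C$ has a chord. The open region $\Omega$ contains no cell centers at all: a cell center in $\Omega$ that belongs to $R$ would be a vertex of $G_R$ interior to $\mathcal C$, contradicting contractibility, while one that does not belong to $R$ would give $\mathcal C$ winding number $\pm 1$ about a point of $\mathbb{R}^2\setminus R$, contradicting that $\mathcal C$ is null-homotopic in the simply connected region $R$. Consequently the four corners of any unit cell of the half-integer grid contained in $\overline{\Omega}$ are half-integer points of $\overline{\Omega}\setminus\Omega=\mathcal C$, hence centers of cells of $R$ lying on $\mathcal C$. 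Since $\overline{\Omega}$ is a nonempty rectilinear region, it contains at least one such unit cell $S$; the four corners of $S$ span a $4$-cycle of $G_R$ whose edges are the four sides of $S$. If all four sides were edges of $\mathcal C$, then $\mathcal C$ would coincide with this $4$-cycle, contradicting $2k\geq 6$; so some side of $S$ is a chord $e$ of $\mathcal C$.

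To finish, observe that since $G_R$ is bipartite, $e$ is automatically an \emph{even} chord: $\mathcal C$ is properly $2$-colored, so the two endpoints of the edge $e$ have opposite colors and hence lie at positions of opposite parity along $\mathcal C$. Adding $e$ therefore splits $\mathcal C$ into two even cycles $\mathcal C_1,\mathcal C_2$ of $G_R$. Because $e$ is a side of the cell $S\subseteq\Omega$ and is not an edge of $\mathcal C$, the relative interior of $e$ lies in $\Omega$; by the Jordan curve theorem $e$ cuts $\Omega$ into two Jordan subdomains, which are the interiors of $\mathcal C_1$ and $\mathcal C_2$, so both $\mathcal C_1$ and $\mathcal C_2$ are contractible. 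Also $e$ joins two non-consecutive vertices of $\mathcal C$, so $\mathcal C_1$ and $\mathcal C_2$ are strictly shorter than $\mathcal C$ and have length $\geq 4$. By the induction hypothesis $B_{\mathcal C_1},B_{\mathcal C_2}\in I_{R_{flip}}$, and by the even-chord reduction used in the proof of Theorem \ref{thm:generators} (cf.\ \cite{oh}), $B_{\mathcal C}$ lies in the ideal generated by $B_{\mathcal C_1}$ and $B_{\mathcal C_2}$; hence $B_{\mathcal C}\in I_{R_{flip}}$. This proves the claim and, via the reduction above, the lemma.

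The step I expect to be the main obstacle is making the geometric argument precise: verifying that a contractible cycle of length $\geq 6$ really does enclose a full unit cell of the half-integer grid whose four corners all lie on the cycle (this is exactly where simple-connectedness of $R$ is used, to rule out interior holes), and checking that cutting along the resulting chord again yields contractible cycles. The algebraic ingredients — the passage from $B_{T_1,T_2}$ to $m\,B_{\mathcal C}$ via Lemma \ref{lem:cycles}, and the even-chord identity expressing $B_{\mathcal C}$ through $B_{\mathcal C_1}$ and $B_{\mathcal C_2}$ — are routine, the latter being a short monomial cancellation.
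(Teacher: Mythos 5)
Your proof is correct, but it follows a genuinely different route through the inductive step than the paper does. You first factor out the shared dominoes to get $B_{T_1,T_2}=m\,B_{\mathcal C}$ via Lemma \ref{lem:cycles} and its remark, so your induction runs over bare contractible cycles rather than over pairs of tilings; you then show that any contractible cycle of length at least $6$ encloses a closed unit grid square all four of whose corners lie on the cycle (this is exactly where you invoke simple connectivity, to exclude enclosed centers of cells outside $R$, and contractibility, to exclude enclosed vertices of $G_R$), hence the cycle has a chord, automatically an even chord by bipartiteness, and you conclude with the standard even-chord identity $B_{\mathcal C}=m_1B_{\mathcal C_1}+m_2B_{\mathcal C_2}$ together with the crosscut observation that both pieces are again contractible. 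The paper instead keeps the tilings throughout: it locates the north-west corner of the contractible cycle, runs a three-case analysis on how that corner is traversed, performs an explicit local flip in two of the cases (splitting off a $2$-cycle and shortening the cycle), and only in the third case splits along an even chord, phrased via auxiliary tilings $S_1,S_2,U_1,U_2$ with $B_{T_1,T_2}=B_{S_1,S_2}+B_{U_1,U_2}$. Your version is more uniform (one case, no flips inside the induction) and isolates cleanly where simple connectivity is used, while the paper's corner analysis is more constructive, effectively exhibiting the flip sequence and matching the style of Lemma \ref{lem:perimetertocontractible}. The only places where your write-up should be expanded in a final version are the ones you already flag: the verification that $\overline{\Omega}$ is a union of closed unit grid squares (so that the enclosed square $S$ exists and cutting along the chord yields two contractible cycles), and the explicit monomial-times-binomial cancellation behind the even-chord identity (cf.\ the argument used for Theorem \ref{thm:generators} and \cite{oh}); both are routine and your sketch of them is accurate.
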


\begin{proof}
Let $\mathcal{C}_1$ be the single contractible cycle of length $\geq 4$.  We will proceed by induction on the length of $\mathcal{C}_1$, which we will denote by $k$.

For the base case, let $k=4$.  Then $\mathcal C_1$ is a single $4$-cycle.  Let $\mathcal C_1$ be labeled as in Figure \ref{fig:4cylewithlabels}, then $B_{T_1, T_2}$ is the product of a monomial and the quadratic $y_1y_3 - y_2y_4$, and thus is generated by quadratics.

\begin{figure}
    \centering
    \includegraphics[scale=.45]{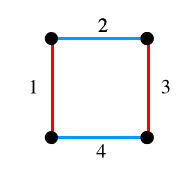}
    \caption{A single $4$-cycle.}
    \label{fig:4cylewithlabels}
\end{figure}

Now assume $\mathcal C_1$ has length $k>4$ and the statement holds whenever the length of $\mathcal C_1$ is less than $k$. Embed $G_R$ into a grid graph and let row $i$ be the first row (scanning from north to south) that contains a vertex covered by $\mathcal C_1$ and let column $j$ be the first column in row $i$ that contains a vertex covered by $\mathcal C_1$;  we will call the $(i,j)$th vertex of the grid graph, the north-west corner of $\mathcal C_1$ and refer to it as $v$. 

By the way we selected the north-west corner, the vertex $v$ must be traversed by $\mathcal C_1$ as illustrated in Figure \ref{fig:north-westcorner}.  Furthermore, since $R$ is simply connected, and $\mathcal C_1$ is a contractible cycle, the highlighted vertex in Figure \ref{fig:north-westcorner} must also be covered by $\mathcal C_1$.  The highlighted vertex can be covered in three ways as shown in Figure \ref{fig:threeways}.

\begin{figure}
    \centering
    \includegraphics[scale=.5]{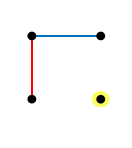}
    \caption{The north-west corner of $\mathcal C_1$.}
    \label{fig:north-westcorner}
\end{figure}

\begin{figure}
    \centering
    \includegraphics[scale=.5]{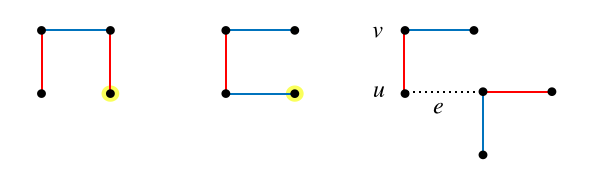}
    \caption{The three ways the north-west corner can be traversed by $\mathcal C_1$.}
    \label{fig:threeways}
\end{figure}

For the first two cases illustrated in Figure \ref{fig:threeways}, we can perform a local flip move that decomposes $C_1$ into a $2$-cycle and a contractible cycle of length less than $\mathcal C_1$, and then apply the induction hypothesis.

For the third case illustrated in Figure \ref{fig:threeways}, note that the edge $e$ in $G_R$ from $u$, the vertex south of $v$, to the highlighted vertex is an even chord of $\mathcal C_1$, since $G_R$ contains only even cycles.  Using $e$, we can split $\mathcal C_1$ into two even contractible cycles, $\mathcal C_2$ and $\mathcal C_3$, that overlap on the edge $e$.  Then, similar to the proof of Lemma \ref{lem:cycles}, and assuming the edges adjacent to $e$ in $\mathcal C_2$ belong to $T_1$, we can define the following two pairs of tilings:
\begin{align*}
S_1 & = T_1 \ \ \ \ \ &S_2 & = (T_1 \setminus \mathcal C_2) \cup (\mathcal C_2 \setminus T_1) \\ U_1 & = (T_1 \setminus \mathcal C_2) \cup (\mathcal C_2 \setminus T_1) = (T_2 \setminus \mathcal C_3) \cup (\mathcal C_3 \setminus T_2) \ \ \ \ \ \ &U_2 & =  T_2.  
\end{align*}
The binomial $B_{T_, T_2}$ can be written in terms of binomials arising from $S_1, S_2$ and $U_1, U_2$:
$$B_{T_1, T_2} = B_{S_1, S_2} + B_{U_1, U_2}.$$
Since $S_1 \cup S_2$ and $U_1 \cup U_2$ both contain only $2$-cycles and a single contractible cycle of length $\geq 4$ but less than $k$,  the binomials $B_{S_1, S_2}$ and $B_{U_1, U_2}$ are both generated by quadratics and thus $B_{T_1, T_2}$ is generated by quadratics.
\end{proof}

We now can begin our proof of Theorem \ref{tilemain}.  We will induct on the binomial degree of the tiling binomial.

\begin{defn}
Let $b$ be a homogeneous binomial of the form $b=m(u-v)$ where $m, u, v$ are monomials and $\gcd(u,v)=1$.  We will call the $\deg u = \deg v$, the \emph{binomial degree} of $b$.
\end{defn}

\begin{proof}[Proof of Thoerem \ref{tilemain}]  Let $B_{T_1, T_2}$ be a non-zero binomial arising from two tilings $T_1, T_2$. We will induct on the binomial degree of $B_{T_1, T_2}$.  

In the base case, let's assume the binomial degree of $B_{T_1,T_2}$ is 2.  Then 
$$B_{T_1, T_2} = y^{T_1 \setminus D_1}(y^{D_1} - y^{D_2})$$
where the size of the move $(D_1, D_2)$ is 2 and thus a flip move.

Now, assume that the binomial degree of $B_{T_1,T_2}$ is $k>2$ and the statement holds whenever the binomial degree is less than $k$.  Note that the binomial degree of $B_{T_1, T_2}$ is equal to $1/2$ the sum of the lengths of all cycles with length $>2$ in $T_1 \cup T_2$.  Thus, we can proceed by considering two cases based on whether $T_1 \cup T_2$ has more than one cycle with length $>2$ or a single cycle of length $>2$.

For the first case, assume $T_1 \cup T_2$ has more than one cycle with length $>2$; let's call these cycles $\mathcal C_1, \ldots, \mathcal C_r$.  By Lemma \ref{lem:cycles},  $B_{T_1, T_2}$ can be written as the sum of $r$ binomials where the $i$th binomial can be factored into a monomial and the cycle binomial $B_{\mathcal C_i}$.  This means that the $i$th binomial in the sum has binomial degree equal to $1/2 \cdot $(length of $\mathcal C_i$), which is less than $k$.  Thus, by applying the induction hypothesis to each binomial in the sum, we have that $B_{T_1, T_2}$ is generated by quadratics and $B_{T_1, T_2} \in I_{flip}$.

For the second case, assume $T_1 \cup T_2$ has a single cycle $\mathcal C_1$ with length $>2$.  In this case, $\mathcal C_1$ is a perimeter cycle, Thus, by combining Lemma \ref{lem:perimetertocontractible} and Lemma \ref{lem:cycles}, and applying Lemma \ref{lem:contractiblecycles} to each resulting contractible cycle, we have $B_{T_1, T_2} \in I_{flip}$. 
\end{proof}

\begin{cor} Let $R$ be a $2$-dimensional simply connected cubiculated region.  Then $\mathcal T_R$ is flip connected.
\end{cor}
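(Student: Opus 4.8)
The plan is to deduce the corollary directly from the two structural results already established: Theorem \ref{tilemain} and Theorem \ref{thm:flipconnected}. By Theorem \ref{tilemain}, for a simply connected $2$-dimensional cubiculated region $R$ every binomial $B_{T_1,T_2}$ arising from two distinct tilings $T_1,T_2$ lies in $I_{R_{flip}}$; since by definition these binomials generate $I_{R_{tiling}}$, this yields the ideal containment $I_{R_{tiling}} \subseteq I_{R_{flip}}$. Theorem \ref{thm:flipconnected} states precisely that this containment is equivalent to $\mathcal{T}_R$ being flip connected, so the corollary is immediate.

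The one point worth articulating is why the \emph{algebraic} containment translates into a \emph{combinatorial} connectivity statement in which every intermediate configuration is again an honest tiling of $R$, i.e.\ stays in the fiber $\mathcal{T}_R$; this is exactly what is packaged into Theorem \ref{thm:flipconnected}. Concretely, a membership $B_{T_1,T_2} \in I_{R_{flip}}$ unwinds into a telescoping sum of monomial multiples of flip binomials, and reading that sum from one end to the other produces the required sequence of flip moves connecting $T_1$ to $T_2$, each partial sum corresponding to a $0$--$1$ vector, hence to a perfect matching of $G_R$, hence to a tiling in $\mathcal{T}_R$. In fact the proofs of Lemma \ref{lem:cycles}, Lemma \ref{lem:perimetertocontractible}, and Lemma \ref{lem:contractiblecycles} already yield these telescoping decompositions constructively, so one may alternatively extract the explicit sequence of flips by chaining those decompositions: Lemma \ref{lem:cycles} reduces $B_{T_1,T_2}$ to a sum over the cycles of $T_1\cup T_2$, Lemma \ref{lem:perimetertocontractible} replaces perimeter cycles by contractible ones via flips, and Lemma \ref{lem:contractiblecycles} resolves each contractible cycle into quadratics.

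Since all of the substantive work has been carried out upstream, no genuine obstacle remains; the only item requiring a word of care is the bookkeeping that every flip binomial appearing in the decomposition has support in $E(G_R)$, so that the associated $4$-cycle is a genuine $4$-cycle of $G_R$ and the corresponding flip is an admissible move of $R$ — and this is automatic from the constructions in the three lemmas, since each flip there is performed on an actual pair of adjacent parallel dominoes inside $R$.
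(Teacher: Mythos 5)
Your proposal is correct and matches the paper's (implicit) argument exactly: the corollary follows by combining Theorem \ref{tilemain}, which gives $I_{R_{tiling}} \subseteq I_{R_{flip}}$, with the equivalence in Theorem \ref{thm:flipconnected}. The additional remarks about unwinding the telescoping flip decompositions are consistent with how the lemmas are used upstream, but they are not needed beyond citing the two theorems.
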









\section*{Acknowledgments}
This paper would not have been possible without the support of the following individuals. We would like to extend gratitude to Dr. Caroline Klivans for inspiring us with her work on the connectivity of domino tiling spaces and lending her thoughts on our journey to proving our main theorem. Dr. Sylvie Corteel provided us constant support, and we thank her for sharing her unique insights for the $2$-dimensional case. We also would like to thank our friends at San Francisco State University, Dr. Serkan Ho\c{s}ten for thoughtful comments on an earlier draft of the manuscript and Dr. Federico Ardila for the many helpful conversations along the way. We thank Dr. Randy McCarthy for providing his insight as a topologist in addition to generating  examples to explore. 

\bibliographystyle{amsplain}
\bibliography{01Notes}




\end{document}